\newtheorem{theorem}{Theorem}[section]
\newtheorem*{thm-num}{Theorem} 
\newtheorem{prop}[theorem]{Proposition}
\newtheorem{lemma}[theorem]{Lemma}
\newtheorem{cor}[theorem]{Corollary}
\theoremstyle{definition}
\newtheorem{rem}{Remark}
\newcommand{\C}{\mathbf{C}}
\newcommand{\Q}{\mathbf{Q}}
\newcommand{\Z}{\mathbf{Z}}
\newcommand{\cf}{\textit{cf. }}
\newcommand{\cO}{\mathcal{O}}
\newcommand{\cT}{\mathcal{T}}
\DeclareMathOperator{\Frob}{Frob}
\DeclareMathOperator{\SL}{SL}
\DeclareMathOperator{\Gal}{Gal}
\DeclareMathOperator{\PGL}{PGL}
\DeclareMathOperator{\rH}{H}
\DeclareMathOperator{\Spec}{Spec}
\title{Congruence formulae for Legendre modular polynomials}
\author{Adel Betina and Emmanuel Lecouturier}
\address{Universitat Polit\`ecnica de Catalunya} 
\email{adel.betina@upc.edu}  
\address{Universit\'e Paris 7}
\email{emmanuel.lecouturier@imj-prg.fr}  
\begin{document}

\maketitle
\begin{abstract}
Let $p\geq 5$ be a prime number. We generalize the results of E. de Shalit \cite{shalit2} about supersingular $j$-invariants in characteristic $p$. 

We consider supersingular elliptic curves with a basis of $2$-torsion over $\overline{\mathbf{F}}_p$, or equivalently supersingular Legendre $\lambda$-invariants. Let $F_p(X,Y) \in \mathbf{Z}[X,Y]$ be the $p$-th modular polynomial for $\lambda$-invariants. A simple generalization of Kronecker's classical congruence shows that $R(X):=\frac{F_p(X,X^{p})}{p}$ is in $\mathbf{Z}[X]$. We give a formula for $R(\lambda)$ if $\lambda$ is a supersingular. This formula is related to the Manin--Drinfeld pairing used in the $p$-adic uniformization of the modular curve $X(\Gamma_0(p)\cap \Gamma(2))$. This pairing was computed explicitly modulo principal units in a previous work of both authors. Furthermore, if $\lambda$ is supersingular and lives in $\mathbf{F}_p$, then we also express $R(\lambda)$ in terms of a CM lift (which are showed to exist) of the Legendre elliptic curve associated to $\lambda$. \end{abstract}

\section{Introduction}

Let $p \geq 5$ be a prime number. 

We are interested in this article in the modular curve $X(\Gamma(2) \cap \Gamma_0(p))$. A plane equation of this curve is given by the classical $p$-th \textit{modular polynomial \`a la Legendre}, which we denote by $F_p(X,Y)$. It is shown that it satisfies the same properties as the classical modular polynomials for the $j$-invariants, namely it is symmetric, has integer coefficients and we have the Kronecker congruence $F_p(X,Y) \equiv (X^p-Y)(X-Y^p)$ modulo $p$. 

This last congruence can be roughly interpreted by saying that the reduction modulo $p$ of $X(\Gamma(2) \cap \Gamma_0(2))$ is a union of two irreducible components isomorphic to $\mathbf{P}^1$. In this work, we show a congruence formula for $F_p(X,X)$ modulo $p^2$, which intuitively gives us information about the reduction of our curve modulo $p^2$. The tools that we use to study this reduction is the $p$-adic uniformization (due to Mumford and Manin--Drinfeld). This was already used by E. de Shalit in his paper \cite{shalit2}, and we follow his method in our case. We do a deep study of some annuli in the supersingular residue disks of the rigid modular curve. 

By combining our previous work (\cf \cite{B-L}) on the $p$-adic uniformization of our modular curve and the present results, we obtain an elementary formula for values taken by $F_p(X,X)$ modulo $p^2$ (which does not however give us a formula for the polynomial itself). We now give more precise details about ours results.

Let $\mathfrak{M}_{\Gamma_0(p) \cap \Gamma(2)}$ be the stack over $\Z[1/2]$ whose $S$-points are the isomorphism classes of generalized elliptic curves $E/S$, endowed with a locally free subgroup $A$ of rank $p$ such that $A+E[2]$ meets each irreducible component of any geometric fiber of $E$ ($E[2]$ is the subgroup of $2$-torsion points of $E$) and a basis of the $2$-torsion (\textit{i.e.} an isomorphism $\alpha_2: E[2] \simeq (\Z/2\Z)^2$). Deligne and Rapoport proved in \cite{D-R} that $\mathfrak{M}_{\Gamma_0(p) \cap \Gamma(2)}$ is a regular algebraic stack, proper, of pure dimension $2$ and flat over $\Z[1/2]$. 

Let $M_{\Gamma_0(p) \cap \Gamma(2)} $ be the coarse space of the algebraic stack $\mathfrak{M}_{\Gamma_0(p) \cap \Gamma(2)}$ over $\Z[1/2]$. Deligne-Rapoport proved that $M_{\Gamma_0(p) \cap \Gamma(2)} $ is a normal scheme and proper flat of relative dimension one over $\Z[1/2]$. Moreover, Deligne-Rapoport proved that $M_{\Gamma_0(p) \cap \Gamma(2)} $ is smooth over $\Z[1/2]$ outside the points associated to supersingular elliptic curves in characteristic $p$ and that $M_{\Gamma_0(p) \cap \Gamma(2)} $ is a semi-stable regular scheme (\cf \cite[V.1.14, Variante]{D-R} and \cite[Proposition 2.1]{B-L} for more details).

Let  $K$ be the unique quadratic unramified extension of $\Q_p$, $\cO_K$ be the ring of integers of $K$ and $k$ be the residual field. Let $\mathfrak{X}$ be the base change  
$M_{\Gamma_0(p) \cap \Gamma(2)} \otimes \cO_K$; it is the coarse moduli space of the base change $\mathfrak{M}_{\Gamma_0(p) \cap \Gamma(2)} \otimes \cO_K$ (because the formation of coarse moduli space commutes with flat base change).

Let $M_{\Gamma(2)}$ be the model over $\Z[1/2]$ of the modular curve $X(2)$ introduced by Igusa \cite{Igusa}. The special fiber of the scheme $\mathfrak{X}$ is the union of two copies of $M_{\Gamma(2)}{ \small \otimes k}$ meeting transversally at the supersingular points, and such that a supersingular point $x$ of the first copy is identified with the point $x^p=\Frob_p(x)$ of the second copy (the supersingular points of the special fiber of $\mathfrak{X}$ are $k$-rational). Moreover, we have $M_{\Gamma(2)}{ \small \otimes k} \simeq \mathbf{P}^1_k$ (\cf \cite[Proposition $2.1$]{B-L}).

The cusps of $M_{\Gamma_0(p) \cap \Gamma(2)}$ correspond to N\'eron $2$-gons or $2p$-gons and are given by sections $\Spec \Z[1/2] \rightarrow \mathfrak{M}_{\Gamma_0(p) \cap \Gamma(2)}$ composed with the coarse moduli map $\mathfrak{M}_{\Gamma_0(p) \cap \Gamma(2)} \rightarrow M_{\Gamma_0(p) \cap \Gamma(2)}$.
f

Mumford's theorem \cite{M} implies that the rigid space $\mathfrak{X}^{rig}$ attached to $\mathfrak{X}$ is the quotient of a $p$-adic half plane $\mathfrak{H}_\Gamma=\mathbf{P}_K^1-\mathcal{L}$ by a Schottky group $\Gamma$, where $\mathcal{L}$ is the set of the limits points of $\Gamma$. Manin and Drinfeld constructed a pairing $\Phi: \Gamma^{ab} \times \Gamma^{ab} \rightarrow K^{\times}$ in \cite{D} and explained how this pairing gives a $p$-adic uniformization of the Jacobian of $\mathfrak{X}_K$.

Let $\Delta$ be the dual graph of the special fiber of $\mathfrak{X}$. Mumford's construction shows that $\Gamma$ is isomorphic to the fundamental group $\pi_1(\Delta)$. The abelianization of $\Gamma$ is isomorphic to to the augmentation subgroup of the free $\mathbf{Z}$-module with basis the isomorphism classes of supersingular elliptic curves over $\overline{\mathbf{F}}_p$. Let $S:=\{e_i\}$ be the set of supersingular points of $\mathfrak{X}_{k}$. We proved in \cite{B-L}, using the ideas of \cite{shalit}, that the pairing $\Phi$ can be expressed, modulo the principal units, in terms of the modular invariant $\lambda$ as follow.

\begin{enumerate} 
 
\item The Manin--Drinfeld pairing $\Phi:\Gamma^{ab} \times \Gamma^{ab} \rightarrow K^{\times}$  takes values in $\Q_p^{\times}$.

\item Let $\bar{\Phi}$ be the residual pairing modulo the principal units $U_1(\Q_p)$ of $\Q_p$, then, after the identification $\Gamma^{ab} \simeq H_1(\Delta, \mathbf{Z}) \simeq \Z[S]^0$, $\bar{\Phi}$ extends to a pairing 

$\Z[S]\times \Z[S] \rightarrow K^{\times}/U_1(K)$ such that:

{\small 
 $$ \bar{\Phi}(e_i,e_j) = \left\{ 
 \begin{array}{cl}
 (\lambda(e_i)-\lambda(e_j))^{p+1} & \mbox{ if  }  i \ne j \text{;} \\
 \pm p \cdot \prod_{k \ne i} (\lambda(e_i)-\lambda(e_k))^{-(p+1)}   \mbox{ if  } i=j .   
 \end{array}
 \right. $$ }
where the sign $\pm$ is $+$ except possibly if $p\equiv 3 \text{ (modulo }4\text{)}$ and $\lambda(e_i)\not\in \mathbf{F}_p$.

\end{enumerate}

\begin{rem}\

i) We have also proved an analogue of the above result when $N=3$ and $p \equiv 1 \text{ (mod }3\text{)}$, for a suitable model $\mathfrak{X}$ of the modular curve of level $\Gamma_0(p) \cap \Gamma(3)$ over $\Z_p$.

ii) The above formula was first conjectured by Oesterl\'e using the modular invariant $j$ instead of the modular invariant $\lambda$ for the modular curve $X_0(p)$ instead of $\mathfrak{X}$, and E. de Shalit proved this conjecture in \cite{shalit} (up to a sign if $p \equiv 3 \text{ (mod }4\text{)}$).

\end{rem}

We recall that the Lambda modular invariant $\lambda : M_{\Gamma(2)} \otimes \Q \rightarrow \mathbf{P}^1_\Q$ is an isomorphism of curves. Let $F_p(X,Y) \in \mathbf{C}[X,Y]$ be the unique polynomial such that for all $\tau$ in the complex upper-half plane, we have:

$$ F_p(\lambda,X)= (X-\lambda(p \tau)) \cdot\prod_{0 \leq a \leq p-1} (X-\lambda((\tau + a)/p)).$$

Note that this polynomial has much smaller coefficients than the corresponding polynomial for the $j$-invariants.
For example, we have:
\begin{align*}
F_3(X,Y)&=X^4+X^3(-256Y^3+384Y^2-132Y )+X^2(384Y^3-762Y^2+384Y )\\&+X(-132Y^3+384Y^2-256Y )+Y^4
\end{align*}
and
\begin{align*}
F_5(X,Y)&=X^6+Y^6-65536\cdot X^5Y^5 + 163840 \cdot X^5Y^4 + 163840 \cdot X^4Y^5 - 138240 \cdot X^5Y^3 \\&- 133120\cdot X^4Y^4 - 138240 \cdot X^3Y^5  + 43520 \cdot X^5 \cdot Y^2  - 207360\cdot X^4Y^3 \\&- 207360 \cdot X^3Y^4 + 43520 \cdot X^2Y^5  - 3590\cdot X^5Y + 133135 \cdot X^4Y^2 \\& + 691180 \cdot X^3Y^3 + 133135\cdot X^2Y^4 - 3590 \cdot XY^5  + 43520 \cdot X^4 Y - 207360 \cdot X^3Y^2 \\& - 207360 \cdot X^2 Y^3 + 43520 \cdot X Y^4 - 138240 \cdot X^3Y - 133120 \cdot X^2 Y^2 - 138240 \cdot X \cdot Y^3  \\& + 163840 \cdot X^2Y + 163840 \cdot XY^2 - 65536 \cdot XY
\end{align*}
while the corresponding polynomials for $j$-invariant are enormous.

 This agrees with the philosophical principle that adding a $\Gamma(2)$ structure simplifies a lot the computations. Another instance of this principle was applied in a paper of the second author about the Eisenstein ideal and the supersingular module. Also, in the case of $\lambda$-invariants, there are no complications due to the elliptic points, so the formula are smoother and there is no conjectural sign as in the $j$ case of E. de Shalit. This principle is one of the motivation we had to generalize E. de Shalit's results to our case.

In this article, we prove that the affine scheme $\Spec \Q[X,Y]/(F_p(X,Y))$ is a plane model of $M_{\Gamma_0(p) \cap \Gamma(2)}$ over $\Q$ (i.e both curves are birational), and that the polynomial $F_p(X,Y)$ satisfies the same basic properties as Kronecker's $p$-th modular polynomial for the modular curve $X_0(p)$. We derive another formula for the diagonal values of $\bar{\Phi}$, related to the polynomial $F_p$ as follow.

\begin{theorem}\label{main_theorem}
\begin{enumerate}\ 

\item We have $F_p(\lambda,X) \in \Z[\lambda,X]$ and $F_p$ gives a plane model the modular curve $M_{\Gamma_0(p) \cap \Gamma(2)} \otimes \Q$.

\item For any lift $\beta_i$ of $\lambda(e_i)$ in $K$, we have $$\bar{\Phi}(e_i,e_i) \equiv F_p(\beta_i,\beta_i^p) \text{ (modulo }U_1(K) \text{).}$$
\item Assume that $\lambda(e_i) \in \mathbf{F}_p$. Let $E_i$ be a lift of $e_i$ to an elliptic curve over $\Q_p(\sqrt{-p})$ with complex multiplication by the maximal order of $\Q_p(\sqrt{-p})$. Then $$\bar{\Phi}(e_i,e_i) \equiv (\lambda(E_i) - \lambda(E_i)^p)^2 \text{ (modulo }U_1(\overline{\mathbf{Q}}_p) \text{).}$$

\end{enumerate}  
\end{theorem}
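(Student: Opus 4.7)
My plan is to prove the three parts of the theorem in order.

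\textbf{Part (i).} For integrality of $F_p(X, Y) \in \C[X, Y]$, a standard $q$-expansion argument suffices: the normalized $q$-expansion of $\lambda$ lies in $\Z[[q^{1/2}]]$, and the $p+1$ roots $\lambda(p\tau), \lambda((\tau+a)/p)$ have $q$-expansions integral over $\Z[\lambda]$, so the elementary symmetric functions in $Y$ (which are the coefficients of $F_p(\lambda, Y) \in \C[\lambda][Y]$) are forced to lie in $\Z[\lambda]$, yielding $F_p \in \Z[X, Y]$. For the plane model claim, I would compute $[\Q(X(\Gamma_0(p) \cap \Gamma(2))) : \Q(\lambda)] = [\Gamma(2) : \Gamma_0(p) \cap \Gamma(2)] = p+1$ and verify that $F_p(\lambda, Y)$ is the minimal polynomial of $\lambda(p\tau)$ over $\Q(\lambda)$; irreducibility follows from the transitive action of $\Gamma(2) / (\Gamma_0(p) \cap \Gamma(2))$ on the $p+1$ cyclic subgroups of order $p$ in $(\Z/p)^2$, so $\Spec \Q[X,Y]/(F_p)$ is birational to the modular curve.

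\textbf{Part (ii).} The key is a local analytic computation at the supersingular residue disks. First establish the $\lambda$-analogue of Kronecker's congruence $F_p(X, Y) \equiv (X^p - Y)(X - Y^p) \pmod p$ from the two-component structure of the special fiber, so $F_p(X, Y) = (X^p - Y)(X - Y^p) + p G(X, Y)$ with $G \in \Z[X, Y]$, and $F_p(X, X^p) = p \cdot R(X)$ with $R(X) = G(X, X^p)$. Expanding around $(\beta_i, \beta_i^p)$ with $\xi = X - \beta_i$, $\eta = Y - \beta_i^p$ and introducing local parameters $u = \xi^p - \eta + p h_1(\xi)$, $v = \xi - \eta^p - p h_2(\eta)$ (with $h_1, h_2$ the binomial correction terms), one checks that $(u, v)$ is a regular system of parameters on $\cO_K[[\xi, \eta]]$ and transforms the defining equation $F_p = 0$ into $uv + p \widetilde{G}(u, v) = 0$ with $\widetilde{G}(0, 0) = R(\beta_i)$. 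The formula for $\bar{\Phi}(e_i, e_i)$ from \cite{B-L} forces $v_p(\bar{\Phi}(e_i, e_i)) = 1$, so the plane model is already regular semi-stable of width one at $e_i$, and locally isomorphic to $\mathfrak{X}$ there. Matching the two local descriptions identifies $R(\beta_i) \bmod p$ with (up to a sign) the residual Manin--Drinfeld value $\bar{\Phi}(e_i, e_i)/p \bmod U_1(K)$, yielding $\bar{\Phi}(e_i, e_i) \equiv p R(\beta_i) = F_p(\beta_i, \beta_i^p) \pmod{U_1(K)}$; the sign is then fixed by direct comparison with the \cite{B-L} formula.

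\textbf{Part (iii).} First construct the CM lift. Since $\lambda(e_i) \in \mathbf{F}_p$, the curve $e_i$ is defined over $\mathbf{F}_p$ and its geometric Frobenius $F \in \End(e_i)$ satisfies $F^2 = -p$ (trace zero for supersingular reduction over $\mathbf{F}_p$), so $\Z_p[\sqrt{-p}] \simeq \Z_p[F]$ embeds into $\End(e_i)$. Deuring's lifting theorem then produces an elliptic curve $E_i$ over $\cO_{\Q_p(\sqrt{-p})}$ reducing to $e_i$ with CM by $\Z_p[\sqrt{-p}]$. Applying Part (ii) to $\beta_i = \lambda(E_i)$ (after extending scalars to $\overline{\Q}_p$) gives $\bar{\Phi}(e_i, e_i) \equiv F_p(\lambda(E_i), \lambda(E_i)^p) \pmod{U_1(\overline{\Q}_p)}$. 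Since $\sqrt{-p}: E_i \to E_i$ is an isogeny of degree $p$ with $E_i / \ker \sqrt{-p} \simeq E_i$, the point $(E_i, \ker \sqrt{-p})$ lies on the modular curve, so $F_p(\lambda(E_i), \lambda(E_i)) = 0$; factor $F_p(\lambda(E_i), Y) = (Y - \lambda(E_i)) \cdot Q(Y)$ and substitute to get $F_p(\lambda(E_i), \lambda(E_i)^p) = (\lambda(E_i)^p - \lambda(E_i)) \cdot Q(\lambda(E_i)^p)$. The final step is to show $Q(\lambda(E_i)^p) \equiv \lambda(E_i)^p - \lambda(E_i) \pmod{U_1(\overline{\Q}_p)}$: the other $p$ roots of $F_p(\lambda(E_i), Y)$ are the $\lambda$-invariants of the quotients $E_i/H$ by the non-$\sqrt{-p}$-stable subgroups $H \subset E_i[p]$, which by CM theory all have CM by the sub-order $\Z_p[p\sqrt{-p}]$ of conductor $p$, and the transitive $(\Z_p[\sqrt{-p}]/p)^\times$-action on such $H$'s arranges these roots into a Galois orbit whose product with $\lambda(E_i)^p$ can be computed modulo principal units via class field theory for the conductor-$p$ order.

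\textbf{Main obstacle.} I expect Part (ii)'s local comparison to be the hardest step: although both the plane model and $\mathfrak{X}$ are regular semi-stable of width one at supersingular points, tracking the leading unit through the non-trivial coordinate change and matching it with the Manin--Drinfeld residue requires careful bookkeeping (particularly the sign, for which one must invoke the explicit \cite{B-L} formula). Part (iii)'s computation of $Q(\lambda(E_i)^p) \pmod{U_1}$ is also delicate, as it rests on an explicit understanding of the ring class field of the conductor-$p$ sub-order in $\Q_p(\sqrt{-p})$ and the Galois action on the CM points lying above $\lambda(E_i)$.
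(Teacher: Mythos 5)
There is a genuine gap in your Part (ii), which is the heart of the theorem. Your local analysis of the plane model is fine as far as it goes: at a supersingular point the completed local ring of $\Spec \cO_K[X,Y]/(F_p)$ is indeed of the form $\cO_K[[u,v]]/(uv - p\cdot(\text{unit}))$ with the unit congruent to $\pm R(\beta_i)$ modulo the maximal ideal. But the step ``matching the two local descriptions identifies $R(\beta_i)\bmod p$ with the residual Manin--Drinfeld value'' does not work, because the element $\varpi$ in a presentation $uv=\varpi$ of an ordinary double point is canonical only up to multiplication by an arbitrary unit of $\cO_K$ (any rescaling $u\mapsto \epsilon u$ changes it), whereas the assertion to be proved is a congruence modulo the \emph{principal} units $U_1(K)$, i.e.\ it pins down a residue in $k^{\times}$. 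To get that finer statement one must use the \emph{specific} coordinates $\lambda-\beta_i$ and $\lambda'-\beta_i^p$ and relate them, through the Mumford uniformization $\tau:\mathfrak{H}_\Gamma\to\mathfrak{X}^{an}_K$, to the theta-series defining $\Phi$. This is exactly what the paper does and what your argument omits: it first proves $\Phi(e_0,e_0)\sim\pi$ by isolating the $\gamma=1$ term of the theta product, where $\pi$ is the parameter of the explicit lift of the Atkin--Lehner involution on the supersingular annulus (formula~(\ref{inverting})); it then expands $\Psi=\lambda'\circ\tau$ on that annulus as $\beta_0+\sum a_ny^n+\sum b_n(\pi/y)^n$ and proves $a_1\sim 1$, $b_p\sim1$ (Lemma~\ref{coefficients}), which is the input that converts the identity $F_p(\lambda,\lambda^p+u)=0$ into $(\pi/y)\cdot y\sim pR(\lambda)$. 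Your fallback of ``fixing the sign by direct comparison with the formula of \cite{B-L}'' also fails in general: that formula itself carries an undetermined sign when $p\equiv 3\pmod 4$ and $\lambda(e_i)\notin\mathbf{F}_p$, and in any case the theorem is meant to be proved independently and then \emph{combined} with \cite{B-L} to yield the Corollary, so leaning on \cite{B-L} here would empty the comparison of its content. Note also that the paper must treat the cases $\lambda(e_i)\in\mathbf{F}_p$ and $\lambda(e_i)\in\mathbf{F}_{p^2}\setminus\mathbf{F}_p$ separately, because $w_p$ fixes the edge $e_i$ in the first case and swaps two edges in the second, changing the construction of $\pi$; a purely local-algebraic argument on the plane model never sees this dichotomy.

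On Part (iii), your Deuring-lifting construction of the CM lift is a legitimate alternative to the paper's class-field-theoretic count (Proposition~\ref{CM_lift}), but your evaluation of $Q(\lambda(E_i)^p)$ via the ring class field of the conductor-$p$ order is only sketched and is substantially harder than what is needed. The paper gets the same conclusion in two lines: since $F_p(\lambda_1,\lambda_1)=0$, a Taylor expansion in the second variable together with the Kronecker congruence $\partial_YF_p(X,Y)\equiv-(X-Y^p)\pmod p$ gives $F_p(\lambda_1,\lambda_1^p)\equiv(\lambda_1-\lambda_1^p)(\lambda_1-\lambda_1^{p^2})\equiv(b\sqrt{-p})^2\pmod{p\sqrt{-p}}$. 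You already have all the ingredients for this shortcut (you proved the Kronecker congruence in Part (ii)), so I would replace the class-field-theoretic product computation by it. Part (i) is essentially correct and close in spirit to the paper's argument.
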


Our approach is based on the technics of $p$-adic uniformization of \cite{shalit} and \cite{shalit2}, on a deep analysis of the supersingular annuli in $\mathfrak{X}^{an}$ and on the action of the Atkin-Lehner involution $w_p : \mathfrak{X} \simeq \mathfrak{X}$. The key point is to relate the diagonal elements of the extended period matrix $\bar{\Phi}$ to the polynomial $F_p(X,Y)$.

\begin{cor} Let $R(X)=F_p(X,X^p)/p \in \Z[X]$, and $\bar{R}(X) \in \mathbf{F}_p$ be the reduction of $R(X) \mod p$. Let $\lambda(e_i) \in \mathbf{F}_{p^2}$ be the $\lambda$-invariant of a supersingular elliptic curve $e_i$. Then, we have: $$\bar{R}(\lambda(e_i))=\pm(-1)^{\frac{p-1}{2}}\prod_{k\ne i} (\lambda(e_i)- \lambda(e_k))^{-(p+1)}$$
where the $\pm$ sign is $+$ except possibly if $p \equiv 3 \text{ (modulo } 4\text{)}$ and $\lambda(e_i) \not\in \mathbf{F}_p$.
 On the other hand, if $\lambda \in \mathbf{F}_{p^2}$ is not a supersingular invariant, then $\bar{R}(\lambda)=0$.

\end{cor}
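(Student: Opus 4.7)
The integrality of $R(X)$ follows from the Kronecker congruence $F_p(X,Y)\equiv (X^p-Y)(X-Y^p)\pmod p$ stated in the introduction, which upon setting $Y=X^p$ yields $F_p(X,X^p)\equiv 0\pmod p$; hence $R(X)\in\Z[X]$ and the reduction $\bar R\in \mathbf{F}_p[X]$ is well-defined. For a supersingular $\lambda(e_i)\in\mathbf{F}_{p^2}$, I would combine Theorem~\ref{main_theorem}(ii), asserting $\bar\Phi(e_i,e_i)\equiv F_p(\beta_i,\beta_i^p)\pmod{U_1(K)}$ for any lift $\beta_i$ of $\lambda(e_i)$, with the explicit diagonal formula from item~(2) of the introduction,
\[
\bar\Phi(e_i,e_i)\equiv \pm p\cdot\prod_{k\ne i}(\beta_i-\beta_k)^{-(p+1)}\pmod{U_1(K)}.
\]
Since the differences $\beta_i-\beta_k$ are units in $\cO_K$ (as the $\lambda(e_i)$ are pairwise distinct in $\overline{\mathbf{F}}_p$), the right-hand side has $p$-adic valuation exactly one, so $F_p(\beta_i,\beta_i^p)/p\in \cO_K^{\times}$; dividing by $p$ and reducing modulo the maximal ideal then produces a formula of the expected shape, \emph{up to} the factor $(-1)^{(p-1)/2}$ discussed below.

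For $\lambda_0\in \mathbf{F}_{p^2}$ \emph{not} supersingular, the associated Legendre elliptic curve $E_{\lambda_0}$ is ordinary, so it admits a Serre--Tate canonical lift $\tilde E$ over $W(\mathbf{F}_{p^2})\subset \cO_K$, whose $\lambda$-invariant $\tilde\lambda:=\lambda(\tilde E)$ is the Teichm\"uller lift of $\lambda_0$; in particular $\sigma(\tilde\lambda)=\tilde\lambda^p$, where $\sigma$ denotes Witt-vector Frobenius. The canonical subgroup $C\subset \tilde E[p]$ yields a cyclic $p$-isogeny $\tilde E\to \tilde E/C\simeq \tilde E^{(\sigma)}$, and $\lambda(\tilde E^{(\sigma)})=\tilde\lambda^p$, so $(\tilde\lambda,\tilde\lambda^p)$ is a characteristic zero point of the modular curve, giving $F_p(\tilde\lambda,\tilde\lambda^p)=0$. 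Hence $R(\tilde\lambda)=0$ in $\cO_K$, and reducing modulo the maximal ideal yields $\bar R(\lambda_0)=0$.

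The main obstacle I foresee is the appearance of the extra factor $(-1)^{(p-1)/2}$ in the supersingular case: a direct comparison shows that the signs in item~(2) of the introduction and in the statement of the corollary \emph{disagree}, specifically for $p\equiv 3\pmod 4$ and $\lambda(e_i)\in \mathbf{F}_p$ they differ by $-1$, so naive division by $p$ is insufficient. I expect this additional sign to emerge from a careful analysis of the two branches $\{Y=X^p\}$ and $\{X=Y^p\}$ of the nodal reduction of $F_p=0$ at the supersingular crossing $(\lambda(e_i),\lambda(e_i)^p)$, together with the action of the Atkin--Lehner involution $w_p:(X,Y)\mapsto(Y,X)$, which permutes the two branches and whose sign on the relevant cotangent line at a fixed point naturally carries the quadratic character $(-1)^{(p-1)/2}$.
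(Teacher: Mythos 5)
Your treatment of the supersingular case is the paper's own route: combine Theorem \ref{main_theorem}(ii) with the diagonal formula for $\bar{\Phi}$ from \cite{B-L}, and you are right that, \emph{as quoted in the introduction}, these two ingredients literally give $\bar{R}(\lambda(e_i))=\pm\prod_{k\ne i}(\lambda(e_i)-\lambda(e_k))^{-(p+1)}$ with no factor $(-1)^{(p-1)/2}$. That factor is nevertheless genuinely needed, and you can see this without any branch analysis, using only material already in the paper: for $p\equiv 3\ (\mathrm{mod}\ 4)$ and $\lambda(e_i)\in\mathbf{F}_p$, the computation concluding the proof of part (iii) gives $pR(\lambda_1)=F_p(\lambda_1,\lambda_1^p)\equiv (b\sqrt{-p})^2=-pb^2\ (\mathrm{mod}\ p\sqrt{-p})$, so $\bar{R}(\lambda(e_i))=-\bar{b}^2$ is a \emph{nonsquare} in $\mathbf{F}_p^{\times}$; on the other hand $\prod_{k\ne i}(\lambda(e_i)-\lambda(e_k))^{-(p+1)}$ is a \emph{square} in $\mathbf{F}_p^{\times}$, since the factors with $\lambda(e_k)\in\mathbf{F}_p$ are $(p+1)$-st (hence even) powers of elements of $\mathbf{F}_p^{\times}$, and the remaining factors pair up by Frobenius into $N_{\mathbf{F}_{p^2}/\mathbf{F}_p}(\lambda(e_i)-\lambda(e_k))^{p+1}$. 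So the two quantities differ by a nonsquare, which forces the extra $(-1)^{(p-1)/2}$; the inconsistency is with the introduction's paraphrase of the theorem of \cite{B-L}, and the corollary's sign is the correct one. Your proposed mechanism (Atkin--Lehner acting on a cotangent line at a node) is speculation, not an argument, and is not where the sign comes from.

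For the non-supersingular case you take a genuinely different route from the paper, which argues via regularity of models: the closed point $(p,X-\beta,Y-\beta^p)$ of $F_p(X,Y)=0$ over $\cO_K$ is regular iff $\bar{R}(\lambda)\ne 0$, and it cannot be regular because the minimal regular model $\mathfrak{X}$ is smooth at ordinary points while the special fibre $(X^p-Y)(X-Y^p)=0$ has a node there. Your canonical-lift argument can be made to work, but as written it contains a false step: the $\lambda$-invariant of the Serre--Tate canonical lift is \emph{not} the Teichm\"uller lift of $\lambda_0$, so $\sigma(\tilde\lambda)\ne\tilde\lambda^p$ in general (were it the Teichm\"uller lift, $F_p(\tilde\lambda,\tilde\lambda^p)$ would vanish exactly and Satoh-type computations of canonical lifts would be trivial). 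What the canonical subgroup actually gives is $F_p(\tilde\lambda,\sigma(\tilde\lambda))=0$ together with $\sigma(\tilde\lambda)\equiv\tilde\lambda^p\ (\mathrm{mod}\ p)$. The repair is a Taylor expansion: writing $\tilde\lambda^p=\sigma(\tilde\lambda)+p\delta$ one gets $F_p(\tilde\lambda,\tilde\lambda^p)\equiv \partial_YF_p(\tilde\lambda,\sigma(\tilde\lambda))\cdot p\delta\ (\mathrm{mod}\ p^2)$, and since $\partial_YF_p\equiv-(X-Y^p)\ (\mathrm{mod}\ p)$ this derivative reduces to $-(\lambda_0-\lambda_0^{p^2})=0$ precisely because $\lambda_0\in\mathbf{F}_{p^2}$; hence $p^2\mid F_p(\tilde\lambda,\tilde\lambda^p)$ and $\bar{R}(\lambda_0)=0$. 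Note that this is exactly where the hypothesis $\lambda_0\in\mathbf{F}_{p^2}$ enters (the statement is false for ordinary $\lambda_0$ outside $\mathbf{F}_{p^2}$), whereas your version of the argument never uses it --- a sign that the step where it should appear has been skipped.
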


\begin{proof}
The first assertion follows by comparing Theorem \cite[1]{B-L} and the Theorem above. 

Let $\lambda \in \mathbf{F}_{p^2}$ which is not supersingular.
Let $\tilde{X}$ be the scheme over $ \cO_K$ defined by $F_p(X,Y)=0$. Let $\beta \in \cO_K$ a lift of $\lambda$ (this lift exists since $M_{\Gamma(2)}$ is proper over $\Z[1/2]$). The closed point $x$ of $\tilde{X}$ corresponding to the maximal ideal $\mathcal{M}=(p, X-\beta, Y-\beta^p) \subset \cO_K[X,Y]$ is regular on $\tilde{X}$ if and only if $F_p(X,Y)$ doesn't belongs to $\mathcal{M}^2$. Using Taylor expansion of $F_p$ at $(\beta,\beta^p)$, we get that $F_p(X,Y)=p.R(\beta)+ F_X(\beta,\beta^p)(X-\beta)+F_Y(\beta,\beta^p)(Y-\beta^p) \mod \mathcal{M}^2$. But it is clear from the Kronecker's congruence that $F_X(\beta,\beta^p)$ and $F_X(\beta,\beta^p)$ are divisible by $p$. Thus, our regularity conditions is equivalent to the fact that $\bar{R}(\lambda) \ne 0$. 

Corollary \ref{Kronecker_congruence} shows that $(\lambda,\lambda^p)$ is a singular point of the special fiber of $\tilde{X}$. But $E$ is ordinary, so the corresponding point on $\mathfrak{X}$ is smooth. Since, the minimal regular model of the normalization of $\tilde{X}$ is unique, it is $\mathfrak{X}$. Since any local regular ring is normal (since it is factorial), the point $x$ is not regular in $\tilde{X}$ and $\bar{R}(\lambda(E))=0$. 
\end{proof}

\section*{Notation}
\begin{enumerate}

\item For any algebraic extension $k$ of the field $\Z/p\Z$, we denote by  $\bar{k}$ the separable closure of $k$.
\item For any congruence subgroup $\Gamma$ of $\SL_2(\mathbf{Z})$, we denote by $\mathfrak{M}_\Gamma$ the stack over $\Z$ whose $S$-points classify generalized elliptic curves over $S$ with a $\Gamma$-level structure.
\item For any congruence subgroup $\Gamma$ of $\SL_2(\mathbf{Z})$ and any $c \in \mathbf{P}^1(\mathbf{Q})$, we denote by $[c]_{\Gamma}$ the cusp of $\Gamma \backslash (\mathfrak{H} \cup \mathbf{P}^1(\mathbf{Q}))$ corresponding to the class of $c$, where $\mathfrak{H}$ is the (complex) upper-half plane.

\item For two congruence subgroups $\Gamma$ and $L$, we denote by $\mathfrak{m}_{\Gamma \cap L}$ for the fiber product of algebraic stacks $\mathfrak{M}_{\Gamma} \times_{\mathfrak{M}} \mathfrak{M}_L$, where $\mathfrak{M}$ is the stack over $\Z$ whose $S$-points classify generalized elliptic curves over the scheme $S$.

\item For any algebraic stack $\mathfrak{M}_{\Gamma}$ over $\Z$, we denote by $M_{\Gamma}$ the coarse moduli space attached to $\mathfrak{M}_{\Gamma}$ ($M_{\Gamma}$ is an algebraic space). 

\item For any proper and flat scheme $\mathfrak{X}$ over $\mathcal{O}_K$, we denote by $\mathfrak{X}_K^{an}$ the rigid analytic space given by the generic fiber of the completion of $\mathfrak{X}$ along its special fiber (\textit{i.e.} $\mathfrak{X}_K(\bar{K}) \simeq \mathfrak{X}^{an}_K(\bar{K}))$. 

\item Let $\mid . \mid_p$ be a $p$-adic valuation on $\bar{\mathbf{Q}}_p$, then we shall denote by $x \sim y$ if and only if $\mid xy^{-1}- 1\mid_p <1$. 

\end{enumerate}

\textit{Acknowledgements.}
The first author has received funding from the European Research Council (ERC) under the European Union's Horizon 2020 research and innovation programme (grant agreement No 682152).
The first named author (A.B.) would like to thank the Universit\'e Paris 7, where most of our discussions took place, for its hospitality. The second named author (E.L.) has received funding from Universit\'e Paris $7$ for his Phd thesis and would like to thank this institution.

\section{Basic properties of coarse moduli spaces of moduli stacks of generalized elliptic curves with $\Gamma(2)$-level structure}

Let $\mathfrak{M}_{\Gamma(2)}$ be the stack over $\Z[1/2]$ parametrizing generalized elliptic curves with $\Gamma(2)$-level structure (see  \cite[IV. Definition 2.4]{D-R}). Deligne-Rapoport proved in \cite[Theorem 2.7]{D-R} that $\mathfrak{M}_{\Gamma(2)}$ is an algebraic stack proper smooth and of relative dimension one over $\Z[1/2]$. Let $M_{ \Gamma(2)}$ be the coarse algebraic space associated to $\mathfrak{M}_{\Gamma(2)}$. Proposition \cite[VI.6.7]{D-R} implies that $M_{ \Gamma(2)}$ is smooth over $\Z[1/2]$; and hence $M_{ \Gamma(2)} \otimes \cO_K$ is a scheme since it is a regular algebraic space of relative dimension one over $\cO_K$.

By the universal property of the coarse moduli space attached to an algebraic stack over a noetherian scheme, we have a coarse moduli map $$g:  \mathfrak{M}_{\Gamma(2)} \rightarrow M_{ \Gamma(2)}$$ such that for any field $L$ of characteristic different from two, $g$ induces a bijection $$\mathfrak{M}_{ \Gamma(2)}(L) \simeq M_{  \Gamma(2)}(L).$$

For any elliptic curve $E$ with a basis of its $2$-torsion over a field $L$ of characteristic different from $2$, $E$ is isomorphic to a unique Legendre curve $E_{\lambda}$: $Y^2=X(X-1)(X-\lambda)$ with basis of $2$-torsion the points $(0,0)$ and $(0,1)$. Hence, we have a bijection $$M_{ \Gamma(2)}(L) \rightarrow \mathbf{P}^1_{\Z[1/2]}(L),$$

associating to an elliptic curve $E$ its lambda invariant $\lambda$.

\begin{prop}\label{iso lambda} There exists an isomorphism $\lambda : M_{\Gamma(2)} \rightarrow \mathbf{P}^1_{\mathbf{Z}[1/2]}$ inducing the previous map on $L$-points for every field $L$ of characteristic different from $2$.
\end{prop}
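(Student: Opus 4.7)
\textit{Plan.} My strategy is to construct the inverse morphism $\psi : \mathbf{P}^1_{\mathbf{Z}[1/2]} \to M_{\Gamma(2)}$ coming from the Legendre family, and then to verify that $\psi$ is an isomorphism by combining birationality on the generic fibre with Zariski's main theorem. On the open subscheme $U := \Spec \mathbf{Z}[1/2][\lambda, 1/(\lambda(\lambda-1))] \subset \mathbf{P}^1_{\mathbf{Z}[1/2]}$, the Legendre curve $\mathcal{E}_{\lambda} : Y^2 = X(X-1)(X-\lambda)$ is a smooth elliptic curve, and its nontrivial $2$-torsion sections $(0,0)$ and $(1,0)$ define a canonical trivialization $\alpha_2 : \mathcal{E}_{\lambda}[2] \simeq (\mathbf{Z}/2\mathbf{Z})^{2}$. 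By the modular interpretation of $\mathfrak{M}_{\Gamma(2)}$ this yields a morphism $U \to \mathfrak{M}_{\Gamma(2)}$, which composed with the coarse-moduli map $g$ gives $\psi_U : U \to M_{\Gamma(2)}$.

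To extend $\psi_U$ to a morphism $\psi : \mathbf{P}^1_{\mathbf{Z}[1/2]} \to M_{\Gamma(2)}$ I would exploit the properness of $M_{\Gamma(2)}$ over $\mathbf{Z}[1/2]$ together with the regularity of $\mathbf{P}^1_{\mathbf{Z}[1/2]}$. The complement $\mathbf{P}^1_{\mathbf{Z}[1/2]} \setminus U$ is the union of the three horizontal divisors $\{\lambda=0\}, \{\lambda=1\}, \{\lambda=\infty\}$; at the generic point of each the local ring is a DVR, so the valuative criterion of properness uniquely extends $\psi_U$ across these codimension-one points. The remaining indeterminacy is confined to finitely many closed points of codimension two, and each can be handled locally: after a ramified quadratic base change around a cusp, the Legendre curve admits a semistable model which is a N\'eron $2$-gon endowed with a canonical $\Gamma(2)$-structure, directly producing a section of $\mathfrak{M}_{\Gamma(2)}$ (hence of $M_{\Gamma(2)}$) in a neighbourhood of the cusp. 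Gluing these local extensions yields the global $\psi$.

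Finally, $\psi$ is an isomorphism. By construction, on $\overline{\mathbf{Q}}$-points it realizes the inverse of the bijection $M_{\Gamma(2)}(\overline{\mathbf{Q}}) \simeq \mathbf{P}^1(\overline{\mathbf{Q}})$ stated above the proposition, so $\psi_{\mathbf{Q}}$ is a bijective morphism of smooth proper curves in characteristic zero and hence an isomorphism; in particular $\psi$ is birational. On each geometric fibre over $\Spec \mathbf{Z}[1/2]$ the pointwise bijection forces $\psi$ to be non-constant, hence finite as a map of smooth proper curves, so $\psi$ is fibrewise finite; combined with properness, $\psi$ is itself finite. A finite birational morphism onto a normal Noetherian scheme is an isomorphism by Zariski's main theorem, and we take $\lambda := \psi^{-1}$.

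The main obstacle is the extension in the second paragraph: exhibiting, around each of the three cuspidal divisors, a family of generalized elliptic curves with the correct $\Gamma(2)$-structure that degenerates to the Legendre family requires a ramified base change to smooth out the nodes of the singular Legendre curves, together with careful bookkeeping of how the $2$-torsion sections extend. Once $\psi$ is defined globally, the isomorphism claim is essentially formal from properness, smoothness, and the known pointwise bijection.
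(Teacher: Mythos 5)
Your route is genuinely different from the paper's: you build the \emph{inverse} map $\psi$ out of the Legendre family and finish with Zariski's main theorem, whereas the paper constructs $\lambda$ directly as the map to $\mathbf{P}^1_{\Z[1/2]}$ attached to the degree-one cusp divisor $D=[1]_{\Gamma(2)}$, using Riemann--Roch and cohomology-and-base-change on the geometric fibers to see that $\rH^0(M_{\Gamma(2)},\cO(D))$ is free of rank $2$ and that $D$ is relatively very ample. Your endgame is sound as stated: $\psi$ proper with finite fibers is finite, birational onto the normal integral scheme $M_{\Gamma(2)}$ (smooth over $\Z[1/2]$ by Deligne--Rapoport), hence an isomorphism. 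The point of the paper's approach is precisely that it never has to say anything about the degenerate Legendre fibers.

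The genuine gap is the one you flag yourself: the global construction of $\psi$. Two specific problems. First, extending a morphism from the complement of finitely many codimension-two points of a regular $2$-dimensional scheme into a \emph{proper} target is not automatic (the rational map $\mathbf{A}^2\dashrightarrow\mathbf{P}^1$, $(x,y)\mapsto[x:y]$, is undefined at the origin even though $\mathbf{P}^1$ is proper), so the reduction to ``finitely many closed points, each handled locally'' carries no content until the local construction is actually done. Second, the local construction itself is the hard part and is only named, not performed: after the ramified quadratic base change $\lambda=t^2$ you obtain an object of $\mathfrak{M}_{\Gamma(2)}$ over the cover, hence a map from the cover to $M_{\Gamma(2)}$, but descending this to the base requires checking that the two pullbacks to the (singular, a priori possibly non-reduced) fiber product of the cover with itself agree as maps to the algebraic space $M_{\Gamma(2)}$ --- agreement on the dense locus where the family is smooth, plus separatedness and reducedness of the fiber product, which must be verified --- and then invoking fppf descent for morphisms to a separated algebraic space; one must also pass from the complete local statement back to a Zariski-local extension. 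None of this is fatal, but as written the proof of existence of $\psi$ near $\lambda=0,1,\infty$ is a placeholder, and it is exactly the step the statement is about (away from the cusps the $L$-point bijection is already recorded before the Proposition). Either carry out the semistable-reduction-and-descent computation at the three cuspidal divisors, or switch to the paper's argument, which gets the map in the other direction for free from the line bundle $\cO(D)$.
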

 
\begin{proof}\

We use similar arguments to those given in the proof of \cite[VI. Theorem 1.1]{D-R}. Let $c=[1]_{\Gamma(2)}$ be the cusp such that the complex modular invariant $\lambda$ has a pole. Since a cusp of $\mathfrak{M}_{\Gamma(2)}$ is given by a section $\Spec \Z[1/2] \rightarrow \mathfrak{M}_{\Gamma(2)}$, then by composing with the coarse moduli map $g$, a cusp of $M_{\Gamma(2)}$ is also given by a section $\Spec \Z[1/2] \rightarrow M_{\Gamma(2)}$. Since $\mathfrak{M}_{\Gamma(2)}$ is proper, there exists a section $c: \Spec \Z[1/2] \rightarrow M_{\Gamma(2)}$ corresponding to $[1]_{\Gamma(2)}$ after base change to $\C$. 
 
Now, we obtain a section $c: \Spec \mathbf{Z}[1/2] \rightarrow M_{\Gamma(2)} $ giving rise to a Cartier divisor $D$ of  $\mathfrak{M}_{\Gamma(2)}$. By proposition \cite[V. 5.5]{D-R}, the geometric fibers of $M_{\Gamma(2)} $ are absolutely irreducible. The genus is constant on the geometric fibers of $M_{\Gamma(2)}$ and equals the genus of the complex modular curves $M_{\Gamma(2)}(\C)$, which is zero (see proposition \cite[7.9]{EGA}). Hence, by applying Riemann-Roch to each geometric fiber of $M_{\Gamma(2)} $ and using the base change compatibility of cohomology on geometric fibers (see \cite[III. Corollary 9.4]{H}), we obtain: $$\rH^1(M_{\Gamma(2)}, \cO_{M_{\Gamma(2)}}(D)) = 0.$$ 
 
Thus, $\rH^0(M_{\Gamma(2)}, \cO_{M_{\Gamma(2)}}(D))$ has rank two over $\mathbf{Z}[1/2]$ and is generated by $\{1,\lambda\}$, so we have a morphism $\lambda: M_{\Gamma(2)} \rightarrow \mathbf{P}^1_{\mathbf{Z}[1/2]}$ (which we normalize so that it coincides with the Legendre lambda-invariant on $L$-points as above). On each geometric fiber $\mathfrak{M}_{\Gamma(2)} \otimes {\bar{k}}$ away from characteristic $2$, we see that the degree of the divisor $D_{\bar{k}}$ corresponding to $D$ on $\mathfrak{M}_{\Gamma(2)} \otimes {\bar{k}}$ is $1$, hence $D_{\bar{k}}$ is very ample (see \cite[IV. corollary 3.2]{H}). Thus, $D$ is relatively very ample over $\mathbf{Z}[1/2]$ (see \cite[9.6.5]{EGA3}) and $\lambda$ is an isomorphism.

\end{proof}

Let $M_{\Gamma(2)}'$ be the affine open of $M_{\Gamma(2)}$ corresponding to $\mathbf{A}^1_{\mathbf{Z}[1/2]} \subset \mathbf{P}^1_{\Z[1/2]}$, $c : M_{\Gamma_0(p) \cap \Gamma(2)} \rightarrow M_{\Gamma(2)}$ be the forgetful of the $\Gamma_0(p)$-level structure (\cf \cite[IV Proposition 3.19]{D-R}), and $M_{\Gamma_0(p) \cap \Gamma(2)}'$ be the inverse image of $M_{\Gamma(2)}'$ by $c$, which is an affine scheme since $c$ is a finite morphism. Denote by $w_p$ the Atkin--Lehner involution on $M_{\Gamma_0(p) \cap \Gamma(2)}$; it preserves $M_{\Gamma_0(p) \cap \Gamma(2)}'$ by \cite[Lemma 7.4]{B-L}).
 Thus, we obtain finite maps $$ (c, c \circ w_p) : M_{\Gamma_0(p) \cap \Gamma(2)} \rightarrow M_{\Gamma(2)} \times M_{\Gamma(2)}$$
and $$ (c, c \circ w_p) : M_{\Gamma_0(p) \cap \Gamma(2)}' \rightarrow M'_{\Gamma(2)} \times M_{\Gamma(2)}'.$$
 
 Let $R$ such that $M_{\Gamma_0(p) \cap \Gamma(2)}'= \Spec R$ and $\Spec \Z[1/2][\lambda,\lambda'] = M_{\Gamma(2)} \times M_{\Gamma(2)}$. The image of the finite morphism (hence proper) $(c, c \circ w_p) : M_{\Gamma_0(p) \cap \Gamma(2)}' \rightarrow M'_{\Gamma(2)} \times M_{\Gamma(2)}'$ is a reduced closed subset $V(I)$, where $I$ is an ideal of $\Spec \Z[1/2][\lambda,\lambda']$ and this ideal equals the kernel of the map $\Z[1/2][\lambda,\lambda'] \rightarrow R$. Thus, we have a finite injective morphism $\Z[1/2][\lambda,\lambda']/I \hookrightarrow R$. Using the going up theorem, we get a surjection $\Spec R \rightarrow \Spec \Z[1/2][\lambda,\lambda']/I$ and the fact that the ring $\Z[1/2][\lambda,\lambda']/I $ is equidimensionnal of dimension two. Hence, the ideal $I$ has codimension one. 
 
 Moreover, the affine scheme $M'_{\Gamma(2)} \times M_{\Gamma(2)}'$ is isomorphic to $\mathbf{A}^2_{\Z[1/2]}$, hence it is a factorial scheme. The ideal $I$ is generated by an element $F(X,Y) \in \Z[1/2][X,Y]$ since the Picard group of a factorial ring is trivial. Thus, $V(I)$ is a principal Weil divisor. 
 
 Since the degree of the two projections $c, c \circ w_p : M_{\Gamma_0(p) \cap \Gamma(2)} \rightarrow M_{\Gamma(2)}$ is equal to $d=\# \mathbf{P}^1(\Z/p\Z)$ and $M_{\Gamma_0(p) \cap \Gamma(2)} \rightarrow M_{\Gamma(2)} \times M_{\Gamma(2)} $ is injective outside  CM-points and the special fiber at $p$, the degree of $F$ as a polynomial in $X$ equals to the degree of $F$ as a polynomial in $Y$, equals to $p+1$. Thus, we have $F(X,Y)= u X^d + v Y^d + \sum_{i+j \leq d \\, i < d, j < d} a_{i,j} X^i Y^j$, where $u,v$ are invertible in $\Z[1/2]$. Moreover, since $w_p$ is an involution, we have $F(X,Y)= \alpha \cdot F(Y,X)$ where $\alpha$ is invertible in $\mathbf{Z}[1/2]$. We must have $\alpha=1$ since else, we have $F(X,X)=0$. This is impossible since this implies that every elliptic curve over $\mathbf{C}$ has CM by some quadratic order. Thus we can assume that $F \in \mathbf{Z}[1/2][X,Y]$ is monic in $X$ and $Y$. It is the clear that $F = F_p$ (the $p$-th modular polynomial). Moreover, the coefficients of $F_p$ are in some cyclotomic ring and thus are in $\mathbf{Z}$. More precisely, the Fourier coefficients of $\lambda(p\tau)$ and $\lambda((\tau+a)/p)$ are in $\mathbf{Z}[\zeta_p]$. Consequently, any coefficient of $F_p(X,\lambda)$ (as a polynomial in $X$) is a polynomial in $\lambda$ with coefficients in $\mathbf{Z}[\zeta_p]$ (\cf \cite[Chapter 5 Theorem 2]{Lang_elliptic}).

We have thus proved the first part of the following result.
\begin{prop}\ We have $F_p \in \mathbf{Z}[X,Y]$ and $F_p(X,Y)=F_p(Y,X)$. The curve $M_{\Gamma_0(p) \cap \Gamma(2)} \otimes \Q$ is birational to $\Spec \Q[X,Y]/(F_p(X,Y))$.
\end{prop}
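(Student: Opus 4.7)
The first two assertions of the proposition are essentially established by the discussion preceding the statement: the image of $(c, c\circ w_p)\colon M_{\Gamma_0(p)\cap\Gamma(2)}' \to \mathbf{A}^2_{\mathbf{Z}[1/2]}$ is a principal Weil divisor cut out by a symmetric polynomial $F \in \mathbf{Z}[1/2][X,Y]$ which is monic of degree $p+1$ in each variable, and the argument via Fourier expansions valued in $\mathbf{Z}[\zeta_p]$ combined with Galois invariance forces the coefficients to lie in $\mathbf{Z}$. Identifying this $F$ with the classical $F_p$ of the introduction, which can be carried out by comparing values at a generic $\tau$ in the upper half plane via the defining product formula, yields the first two parts of the proposition.

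For the birationality over $\mathbf{Q}$, my plan is a clean degree count. The morphism $\phi := (c, c \circ w_p)\colon M_{\Gamma_0(p)\cap\Gamma(2)} \otimes \mathbf{Q} \to \Spec \mathbf{Q}[X,Y]/(F_p(X,Y))$ is finite and surjective onto $V(F_p) \otimes \mathbf{Q}$. The target is irreducible, for otherwise $F_p$ factors in $\mathbf{Q}[X,Y]$ as $F_p = GH$ with $G,H$ non-constant and coprime, and then the irreducible source $M_{\Gamma_0(p)\cap\Gamma(2)}\otimes\mathbf{Q}$ would surject onto a single component (say $V(G)$), contradicting surjectivity onto all of $V(F_p)$. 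Factoring $c = p_X \circ \phi$, where $p_X\colon V(F_p)\otimes\mathbf{Q} \to M_{\Gamma(2)}\otimes\mathbf{Q}$ is projection onto the first coordinate, the degree of $c$ equals $[\Gamma(2) : \Gamma_0(p)\cap\Gamma(2)] = p+1$, while the degree of $p_X$ equals the $Y$-degree of $F_p$, which is also $p+1$. Multiplicativity of degrees forces $\deg(\phi) = 1$, which is precisely birationality.

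The main technical subtlety is not the degree count itself, which is immediate, but rather verifying the preceding constructions rigorously: that the scheme-theoretic image of $(c, c\circ w_p)$ coincides with the full divisor cut out by $F$ (and not a proper closed subscheme of it), that the polynomial $F$ arising from this algebro-geometric construction matches the classical modular polynomial $F_p$ up to a unit (which is checked via $q$-expansions and the explicit product description of $F_p$), and that $\mathbf{Z}[1/2]$-integrality can be upgraded to $\mathbf{Z}$-integrality via the $\mathbf{Z}[\zeta_p]$ argument quoted from \cite[Chapter 5 Theorem 2]{Lang_elliptic}. Once these preparatory steps are secured, the birationality statement drops out without further computation.
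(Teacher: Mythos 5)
Your argument is correct and is essentially the paper's own: the first two assertions are taken from the preceding construction of $F$ as the monic symmetric generator of the kernel of $\mathbf{Z}[1/2][\lambda,\lambda']\to R$ with $\mathbf{Z}[\zeta_p]$-integrality of coefficients, and your degree count $\deg(c)=\deg(p_X)\cdot\deg(\phi)$ with $\deg(c)=p+1=\deg_Y F_p$ is exactly the paper's comparison of the function-field degrees in the tower $\Q(\lambda)\subset K(\Q[X,Y]/(F_p))\subset K(M_{\Gamma_0(p)\cap\Gamma(2)})$. The only point handled slightly differently is the integrality/reducedness of the target (the paper gets it for free from the embedding $\mathbf{Z}[1/2][X,Y]/(F_p)\hookrightarrow R$ into a domain, whereas you argue irreducibility via surjectivity), but both routes are sound.
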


\begin{proof}

Since $M_{\Gamma_0(p) \cap \Gamma(2)})' $ is irreducible, $\Spec \Z[1/2][X,Y]/(F_p(X,Y))$ is irreducible (we have $\Z[1/2][X,Y]/(F_p(X,Y))\subset R$ and $R$ is an integral domain). Let $K(M_{\Gamma_0(p) \cap \Gamma(2)})$ be the field of rational functions of $M_{\Gamma_0(p) \cap \Gamma(2)}$ and $K(\Z[1/2][X,Y]/(F_p(X,Y))$ be the function field of $\Spec \Z[1/2][X,Y]/(F_p(X,Y))$.

We have inclusions
$$\Q(\lambda) \subset K(\Z[1/2][X,Y]/(F_p(X,Y))) \subset K(M_{\Gamma_0(p) \cap \Gamma(2)})$$ and by comparing degrees, we have $K(\Z[1/2][X,Y]/(F_p(X,Y))) = K(M_{\Gamma_0(p) \cap \Gamma(2)}) $. Thus, the curve $M_{\Gamma_0(p) \cap \Gamma(2)} \otimes \Q$ is birational to $\Spec \Q[X,Y]/(F_p(X,Y))$, since they have the same field of rational functions and $K(\Z[1/2][X,Y]/(F_p(X,Y))) \cap \bar{\Q}=\Q$ (the cusps of $M_{\Gamma_0(p) \cap \Gamma(2)}\otimes \Q$ are $\Q$-rational).
 
  \end{proof}

If $E$ is an elliptic curve over $\overline{\mathbf{F}}_p$ and $ E \rightarrow E^{(p)}$ be the Frobenius, then $E$ is ordinary if and only if the kernel of Frobenius is isomorphic to the finite flat group scheme $\mu_p$. The Atkin--Lehner involution $w_p$ sends the multiplicative component of the special fiber of $M_{\Gamma_0(p) \cap \Gamma(2)}$ to the \'etale component via $\lambda \mapsto \lambda^p$. 

\begin{cor}\label{Kronecker_congruence}
The reduction of $F_p(X,Y)$ modulo $p$ is $(X^p -Y) (X-Y^p)$.
\end{cor}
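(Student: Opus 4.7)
The plan is to exhibit $(X^p - Y)(X - Y^p)$ as a divisor of $F_p \bmod p$ via the moduli interpretation of $M_{\Gamma_0(p) \cap \Gamma(2)}$ in characteristic $p$, and then to conclude equality by matching $X$-degrees and leading coefficients. Under $(c, c \circ w_p)$, a geometric point $(E, A, \alpha_2)$ maps to $(\lambda(E), \lambda(E/A))$. By the remark immediately preceding the corollary, on the multiplicative component of the special fiber the subgroup $A$ is $\ker \Frob_E$ for an ordinary $E/\overline{\mathbf{F}}_p$, so $E/A \simeq E^{(p)}$ and hence $\lambda(E/A) = \lambda(E)^p$.

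Consequently $F_p(\lambda_0, \lambda_0^p) \equiv 0 \pmod p$ for every ordinary $\lambda_0 \in \overline{\mathbf{F}}_p$. Since supersingular $\lambda$-invariants are finite in number, the univariate polynomial $F_p(X, X^p) \bmod p$ has infinitely many roots in $\overline{\mathbf{F}}_p$ and therefore vanishes identically, yielding $(Y - X^p) \mid F_p \bmod p$ in $\mathbf{F}_p[X,Y]$. The symmetry $F_p(X,Y) = F_p(Y,X)$ then forces $(X - Y^p) \mid F_p \bmod p$ as well. These two divisors are coprime in $\overline{\mathbf{F}}_p[X,Y]$ (one is monic linear in $Y$, the other monic linear in $X$, and they are not associates), so their product divides $F_p \bmod p$.

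To finish, I would compare $X$-degrees: both $F_p$ and $(Y - X^p)(X - Y^p)$ have $X$-degree $p+1$ (for $F_p$ this comes from the discussion preceding the previous proposition), so the quotient is a constant in $\mathbf{F}_p$. Matching the coefficients of $X^{p+1}$ ($+1$ for the monic $F_p$, $-1$ for $(Y - X^p)(X - Y^p)$) pins this constant down as $-1$, giving the desired congruence $F_p \equiv (X^p - Y)(X - Y^p) \pmod p$. I do not foresee a serious obstacle here; the only input beyond what is already in the excerpt is the identification $\lambda(E^{(p)}) = \lambda(E)^p$ for ordinary $E$, which is immediate since $E^{(p)}$ is the pullback of $E$ along the absolute Frobenius of $\overline{\mathbf{F}}_p$ and $\lambda$ is a rational function of the coefficients of a Weierstrass equation.
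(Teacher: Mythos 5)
Your proof is correct and follows essentially the same route as the paper: both arguments use the moduli interpretation of the ordinary locus of the special fiber to produce the zero set of $(X^p-Y)(X-Y^p)$ inside $V(F_p \bmod p)$ and then conclude by a degree and leading-coefficient count. The only (harmless) variations are that you obtain the factor $X-Y^p$ from the symmetry $F_p(X,Y)=F_p(Y,X)$ rather than from the \'etale component (where $\lambda=\lambda'^p$), and you deduce divisibility by $Y-X^p$ via the univariate specialization $F_p(X,X^p)\equiv 0$ having infinitely many roots, whereas the paper invokes reducedness of the scheme-theoretic image; note also that matching the coefficient of $X^{p+1}$ is what actually forces your quotient (a priori only in $\mathbf{F}_p[Y]$ after the $X$-degree comparison) to be the constant $-1$.
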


\begin{proof}
Let $x$ be an element of $\mathfrak{X}(\bar{k})$ corresponding to $(E,\alpha_2,H)$ such that $E$ is not supersingular. We have two cases: 

If $H$ is a multiplicative subgroup of order $p$, then from the discussion above, it is clear that $\lambda(x)^p=\lambda(w_p(x))$. 

Otherwise, $H$ is \'etale and $\lambda(x)=\lambda(w_p(w_p(x)))=\lambda(w_p(x))^p$. 

Moreover, since the open given by the complementary of supersingular elliptic curves is dense in the special fiber of $\mathfrak{X}$, the zeros of the polynomial $(X^p-Y)(Y^p-X) \in \overline{\mathbf{F}}_p[X,Y]$ are zeros of $F_p$ modulo $p$. Furthermore, $\mathbf{Z}[1/2][X,Y]/(F_p(X,Y))$ is reduced ($\Spec \mathbf{Z}[1/2][X,Y]/(F_p(X,Y))$ is the scheme theoretic image of $(c,c\circ w_p)$). Thus, in this ring we have $(X^p-Y)(Y^p-X) = 0$ and by comparing the degree, we have the equality.
\end{proof}

\begin{rem}
This corollary could be proved in a more down-to-earth way, like in \cite[Chapter $5$ \S 2]{Lang_elliptic}.
\end{rem}
\section{$p$-adic uniformization and the reduction map}

Let $\mathfrak{X}$ be the modular curve $M_{\Gamma(p) \cap \Gamma(2)} \otimes \cO_K$. Since the singularities of the special fiber of $\mathfrak{X}$ are $k$-points, Mumford's Theorem \cite{M} shows the existence of a free discrete subgroup $\Gamma \subset \PGL_2(K)$ (\textit{i.e.} a Schottky group) and of a $\Gal(\bar{K}/K)$-equivariant morphism of rigid spaces:$$\tau: \mathfrak{H}_{\Gamma} \rightarrow \mathfrak{X}_K^{an}$$ inducing an isomorphism $\mathfrak{X}_K^{an} \simeq \mathfrak{H}_{\Gamma}/\Gamma$, where $\mathfrak{H}_{\Gamma}=\mathbf{P}^1_K - \mathcal{L}$ and $\mathcal{L}$ is the set of limit points of $\Gamma$. Note that $\mathfrak{H}_{\Gamma}$ is an admissible open of the rigid projective line $\mathbf{P}^1_K$. 

Let $\mathcal{T}_{\Gamma}$ be the subtree of the Bruhat--Tits tree for $\PGL_2(K)$ generated by the axes whose ends correspond to the limit points of $\Gamma$. 
Mumford constructed in \cite{M} a continuous map $\rho: \mathfrak{H}_{\Gamma} \rightarrow \mathcal{T}_{\Gamma}$ called the reduction map.

The special fiber of $\mathfrak{X}$ has two components, and each component has $3$ cusps. One of these components, which we call the \textit{\'etale} component, classifies elliptic curves or $2p$-sided N\'eron polygons over $\bar{k}$ with an \'etale subgroup of  order $p$ and a basis of the $2$-torsion. The other component, which we call the \textit{multiplicative} component, classifies elliptic curves or $2$-sided N\'eron polygons over $\bar{k}$ with a multiplicative subgroup of order $p$ and a basis of the $2$-torsion. The involution $w_p$ sends a $2p$-gon to a $2$-gon. Let $c$ and $c'=w_p(c)$ be two cusps of $M_{\Gamma_0(p)\cap \Gamma(2)}(\C)$ such that $c$ is above $c_{\Gamma(2)}$ and we know also by proposition \cite[7.4]{B-L} that $c'$ is also above $c_{\Gamma(2)}$ ($\xi_{c}$ corresponds to a $2p$-gon and $\xi_{c'}=w_p(\xi_c)$ corresponds to a $2$-gon). 

The dual graph $\Delta$ of the special fiber of $\mathfrak{X}$ has two vertices $v_{c'}$ and $v_{c}$ indexed respectively by the cusps $\xi_{c'}$ and $\xi_c$. There are $g+1$ edges $e_i$ ($i \in \{0,...,g\}$) corresponding to supersingular elliptic curves with a $\Gamma(2)$-structure. We orient these edges so that they point out of $v_{c'}$.

The Atkin-Lehner involution $w_p$ exchanges the two vertices $v_{c'}$ and $v_{c}$ and also acts on edges (reversing the orientation). More precisely, if $E_i$ is a supersingular elliptic curve corresponding to $e_i$, then $w_p(e_i)=e_j$ where $e_j$ is the elliptic curve associated to $E_i^{(p)}=w_p(E_i)$ (here $w_p$ is the Frobenius). Thanks to Lemma \ref{normalization} below, one can identify the generators $\{\gamma_i\}_{1\leq i\leq g+1}$ of $\Gamma$ with $(e_i-e_0)_{1\leq i \leq g+1}$. 

Let $\tilde{v}_{c}$ and $\tilde{v}_{c'}$ be two neighbour vertices of $\mathcal{T}_{\Gamma}$ reducing to $v_{c'}$ and $v_{c}$ respectively, such that the edge linking $\tilde{v}_{c}$ to $\tilde{v}_{c'}$ reduces to $e_0$ modulo $\Gamma$. For $0 \leq i \leq g$, let $\tilde{e}_i'$ be an edge pointing out of $\tilde{v}_{c'}$ and reducing to $e_i$ modulo $\Gamma$. Let $\tilde{e}_i$ be oriented edges of $\mathcal{T}_{\Gamma}$ lifting $e_i$ and pointing to $\tilde{v}_c$. Note that $\tilde{e}_0 = \tilde{e}_0'$.

 Let $A=\rho^{-1}(\tilde{v}_{c})$ and $A'=\rho^{-1}(\tilde{v}_{c'})$.  $A$ (resp. $A'$) is the complement of $g+1$ open disks in $\mathbf{P}^1_K$, hence $\mathbf{P}^1_K - A= \underset{0\leq i \leq g}{\coprod} B_i$ and $\mathbf{P}^1_K - A'=\underset{0\leq i \leq g}{\coprod} C'_i$ where $0 \leq i \leq g$. We index $B_i$ and $C'_i$ such that $A \subset C'_0$, $A' \subset B_0$, $B_i$ and $C'_i$ are associated to $\tilde{e}_i'$ and $\tilde{e}_i$ respectively.

For all $0\leq i \leq g$, $\rho^{-1}(\tilde{e}_i')=c_i$ is an annulus of $C'_i$ and $C_i=C'_i-\rho^{-1}(\tilde{e}_i')$ is a closed disk; we also have $\mathbf{P}^1_K-C_0=B_0$. We have $$\mathbf{P}^1_K - \rho^{-1}(\underset{0 \leq i \leq g}{\cup} \tilde{e_i}' \cup \tilde{v}_{c'})=\underset{0\leq i \leq g}{\coprod} C_i \text{ .}$$

Note that $\tilde{v}_{c} \cup \tilde{v}_{c'} \cup_i \{\tilde{e}_i'\}$ is a fundamental domain of $\mathcal{T}_{\Gamma}$, so $$D=\mathbf{P}^1_K-\underset{1\leq i \leq g}{\coprod} B_i \cup \underset{1\leq i \leq g}{\coprod} C_i$$ is a fundamental domain of $\mathfrak{H}_{\Gamma}$.

\begin{lemma}\cite[Lemma 3.3]{B-L}\label{normalization}
We can choose $\Gamma$ such that there is a Schottky basis $\alpha_1,..., \alpha_g$ of $\Gamma$,  and a fundamental domain $D$ satisfying:

\begin{enumerate}

\item $B_i$ is the open residue disk in the closed unit disk of $\mathbf{P}^1_K$ which reduces to $\lambda(e_i)^p$, $\forall 0\leq i \leq g$.

\item For $1\leq i \leq g$, $\alpha_i$ corresponds, under the identification $\Gamma^{ab} = \Z[S]^0$, to $e_i-e_0$.

\item $\alpha_i$ sends bijectively $\mathbf{P}^1_K-B_i$ to $C_i$ and $\alpha_i^{-1}$ sends bijectively $\mathbf{P}^1_K-C_i$ to $B_i$.

\item The annulus $c_i$ is isomorphic, as a rigid analytic space, to $\{z , |p| < |z|< 1\}$.

\end{enumerate}

\end{lemma}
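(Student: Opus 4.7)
The plan is to start from Mumford's uniformization, choose a Schottky basis of $\Gamma$ compatible with the combinatorics of the dual graph $\Delta$, and then exploit the remaining $\mathrm{PGL}_2(\mathcal{O}_K)$-freedom in the coordinate on $\mathbf{P}^1_K$ to arrange the residue-disk conditions. For (ii), since $\Gamma \simeq \pi_1(\Delta)$ is free of rank $g$, its abelianization is canonically $H_1(\Delta, \mathbf{Z})$, which equals the kernel $\mathbf{Z}[S]^0$ of the boundary map on the chain complex of $\Delta$. A basis of this kernel is the family of $1$-cycles $e_i - e_0$ for $1 \leq i \leq g$. Lifting each such cycle to a closed loop in $\Delta$ based at $v_{c'}$ and then to the corresponding deck transformation of $\mathcal{T}_\Gamma$ via Mumford's path-lifting yields a Schottky basis $(\alpha_i)$ with the prescribed image in $\Gamma^{\mathrm{ab}}$. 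Property (iii) is then formal from the Schottky picture: with $\alpha_i$ so chosen, $\alpha_i$ pairs the boundary disks $B_i$ and $C_i$ of the standard fundamental domain $D$ associated to the subtree $\tilde{v}_c \cup \tilde{v}_{c'} \cup_i \{\tilde{e}_i'\}$, and the defining property of a Schottky basis is precisely that $\alpha_i$ carries $\mathbf{P}^1_K - B_i$ isomorphically onto $C_i$.

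For (i), I would use the remaining $\mathrm{PGL}_2(\mathcal{O}_K)$-freedom in the choice of coordinate on $\mathbf{P}^1_K$ to normalize $A = \rho^{-1}(\tilde{v}_c)$ to be the complement in $\mathbf{P}^1_K$ of the $g+1$ supersingular residue disks on the component of the special fiber to which $\tilde{v}_c$ reduces. By the Deligne--Rapoport description of the special fiber, the reduction of $A$ lies on the irreducible component indexed by $v_{c'}$. On this component the forgetful map to $M_{\Gamma(2)}$ is given by $c \circ w_p$ rather than by $c$, so the supersingular point corresponding to the elliptic curve with $\lambda$-invariant $\lambda(e_i)$ sits at the parameter value $\lambda(e_i)^p$; this is exactly the content of Kronecker's congruence (Corollary \ref{Kronecker_congruence}). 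Normalizing the rigid-analytic coordinate on $\mathbf{P}^1_K$ to match this parameter then establishes (i).

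For (iv), the semistability of $\mathfrak{X}$ at each supersingular node (Deligne--Rapoport; cf. \cite[Proposition 2.1]{B-L}) provides an isomorphism of the completed local ring at the node with $\mathcal{O}_K[[x,y]]/(xy - p)$, so the rigid annulus $c_i$ over the node is exactly $\{z : |p| < |z| < 1\}$, as required. The main obstacle I anticipate is checking that the single $\mathrm{PGL}_2(\mathcal{O}_K)$-normalization used in (i) is compatible with the Schottky-basis choice of (ii): one needs to verify that after fixing the Schottky basis, and hence the fundamental domain $D$, there is enough remaining coordinate freedom to place all $g+1$ disks $B_i$ at the prescribed reductions $\lambda(e_i)^p$. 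This works because once $\tilde{v}_c$ is chosen, the reduction map already determines each $B_i$'s reduction point intrinsically on the $v_{c'}$-component, so the $\mathrm{PGL}_2(\mathcal{O}_K)$-freedom only needs to bring these intrinsic reductions into agreement with $\lambda(e_i)^p$ under the $\lambda$-coordinate, which is achieved by a single Möbius transformation via the triple-transitivity of $\mathrm{PGL}_2(k)$ on $\mathbf{P}^1(k)$.
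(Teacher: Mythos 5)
First, a remark on what you are being compared against: the paper does not prove this lemma at all --- it is quoted verbatim from the authors' earlier work \cite[Lemma 3.3]{B-L} --- so there is no in-text argument here, and your proposal has to stand on its own. Your overall strategy (identify $\Gamma^{ab}$ with $H_1(\Delta,\mathbf{Z})=\mathbf{Z}[S]^0$, lift the cycles $e_i-e_0$ to deck transformations of $\mathcal{T}_\Gamma$ to get (ii) and (iii), read off (iv) from the semistable local equation $xy=p$ at the supersingular nodes, and use the residual coordinate freedom on $\mathbf{P}^1_K$ for (i)) is the natural one and is essentially what the construction in \cite{B-L} amounts to.

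There is, however, a genuine gap in your argument for (i). You claim the intrinsic reduction points of the $g+1$ disks $B_i$ can be brought to the prescribed values $\lambda(e_i)^p$ ``by a single M\"obius transformation via the triple-transitivity of $\PGL_2(k)$.'' Triple transitivity only lets you prescribe the images of three points, whereas here $g+1$ points must land on $g+1$ prescribed targets, and $g+1>3$ for all but the smallest primes (already $g+1=5$ for $p=11$ and $g+1=6$ for $p=13$); no transitivity argument can produce such a transformation. The correct mechanism is different, and you in fact have its key ingredient when you observe that the reductions are intrinsically determined: the uniformization $\tau$ identifies the canonical reduction of the affinoid $A=\rho^{-1}(\tilde{v}_c)$, which in the chosen coordinate is $\mathbf{P}^1_k$ minus $g+1$ closed points, with the ordinary locus of the relevant component of $\mathfrak{X}_k$, which via the appropriate modular coordinate is $\mathbf{P}^1_k$ minus the supersingular points. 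Any isomorphism between complements of finite sets in $\mathbf{P}^1_k$ extends to an automorphism of $\mathbf{P}^1_k$, i.e., an element of $\PGL_2(k)$, and this element automatically (not by transitivity) carries the reduction of each $B_i$ to $\lambda(e_i)^p$; lifting it through the surjection $\PGL_2(\mathcal{O}_K)\rightarrow\PGL_2(k)$ and changing the coordinate on $\mathbf{P}^1_K$ accordingly gives (i) without disturbing (ii)--(iv). With that replacement your outline goes through; the only other point worth tightening is (iii), where ``the defining property of a Schottky basis'' should be verified for this particular fundamental domain by checking from the action on $\mathcal{T}_\Gamma$ that $\alpha_i$ translates along an axis meeting $B_i$ and $C_i$ and hence pairs exactly these two boundary disks.
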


For $a, b \in \mathfrak{H}_{\Gamma}$, define the meromorphic function $\theta(a,b; z) = \theta((a)-(b); z)$ ($z \in \mathfrak{H}_{\Gamma}$) by the convergent product 
$$ \theta(a,b;z)= \prod_{\gamma \in \Gamma} \frac{z-\gamma a}{z-\gamma b} \text{ .}$$

See \cite{D} for the basic properties of these theta functions. 

For all $a,b \in \mathfrak{H}_{\Gamma}$, the theta series $\theta(a,b,.)$ converges and defines a rigid meromorphic function on $\mathfrak{H}_{\Gamma}$ (which is modified by a constant if we conjugate $\Gamma$). We extend $\theta$ to degree zero divisors $D$ of $\mathfrak{H}_\Gamma$. The series $\theta(D,.)$ is entire if and only if $\tau_{*}(D)=0$, where we recall that $\tau : \mathfrak{H}_{\Gamma} \rightarrow \mathfrak{X}^{an}_K$ is the uniformization. 

The proposition below follows from \cite{D} (see also \cite{shalit}).

\begin{prop}\label{theta}\cite{D}
\begin{enumerate}
\item $\theta(a,b,z)=c(a,b,\alpha) \theta(a,b,\alpha z)$, where $\alpha \in \Gamma$ and $c(a,b,\alpha \beta)=c(a,b,\alpha)c(a,b,\beta)$.
\item The function $u_{\alpha}(z)=\theta(a,\alpha a,z)$ does not depend on $a$, and $u_{\alpha \beta}=u_{\alpha} u_{\beta}$.
\item $c(a,b,\alpha)= u_{\alpha}(a)/u_{\alpha}(b)$.
\item $\theta(a,b,z)/\theta(a,b,z')=\theta(z,z',a)/\theta(z,z',b)$.

\end{enumerate}
\end{prop}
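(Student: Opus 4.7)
The plan is to derive all four identities directly from the defining product $\theta(a,b;z) = \prod_{\gamma \in \Gamma}(z - \gamma a)/(z - \gamma b)$. Since $\Gamma$ is Schottky, the orbits $\Gamma a$ and $\Gamma b$ accumulate only on the limit set $\mathcal{L}$, and on any affinoid subdomain of $\mathfrak{H}_\Gamma$ the factors of this product are exponentially close to $1$ as the word length of $\gamma$ grows; this ensures convergence and justifies the bijective reindexings of $\Gamma$ by left multiplication that I will use repeatedly.

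For (1), the key identity is $\alpha z - \alpha w = (\det\alpha)(z - w)/((rz+s)(rw+s))$ for $\alpha = \bigl(\begin{smallmatrix} p & q \\ r & s\end{smallmatrix}\bigr) \in \Gamma$. Applying it with $w = \alpha^{-1}\gamma a$, I rewrite $\alpha z - \gamma a$ as $(z - \alpha^{-1}\gamma a)$ times a factor that does not depend on $z$, do the same for $\alpha z - \gamma b$, and reindex $\gamma' = \alpha^{-1}\gamma$. The $z$-dependent pieces reassemble into $\theta(a,b;z)$ and the $z$-independent pieces into a constant that I name $c(a,b,\alpha)^{-1}$, giving the functional equation. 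The cocycle identity $c(a,b,\alpha\beta) = c(a,b,\alpha)c(a,b,\beta)$ is then immediate: apply the functional equation twice (once for $\alpha$, then for $\beta$ with $z$ replaced by $\alpha z$) and compare with the single application for $\alpha\beta$.

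For (2), independence of $u_\alpha$ from $a$ follows by computing the ratio $\theta(a',\alpha a';z)/\theta(a,\alpha a;z)$, splitting it as $\theta(a',a;z)$ times $\prod_\gamma (z-\gamma\alpha a)/(z-\gamma\alpha a')$, and reindexing the second factor via $\gamma' = \gamma\alpha$ (using $\Gamma\alpha = \Gamma$) to recognize it as $\theta(a,a';z) = \theta(a',a;z)^{-1}$; the ratio collapses to $1$. The multiplicativity $u_{\alpha\beta} = u_\alpha u_\beta$ comes from the divisor decomposition $(a)-(\alpha\beta a) = \bigl((a)-(\beta a)\bigr) + \bigl((\beta a) - (\alpha\beta a)\bigr)$, which translates into $\theta(a,\alpha\beta a;z) = \theta(a,\beta a;z)\,\theta(\beta a,\alpha\beta a;z)$; the first factor is $u_\beta(z)$ and the second becomes $u_\alpha(z)$ once the base-point freedom just established is applied with base point $\beta a$.

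For (4), each factor of $\theta(a,b;z)/\theta(a,b;z')$ is the cross ratio $[z,z';\gamma a,\gamma b]$, which equals $[\gamma^{-1}z,\gamma^{-1}z';a,b]$ by M\"obius invariance; reindexing $\gamma \to \gamma^{-1}$ identifies the resulting product with $\theta(z,z';a)/\theta(z,z';b)$. Property (3) is then a three-line corollary: by (1), $c(a,b,\alpha) = \theta(a,b;z)/\theta(a,b;\alpha z)$; by (4) this equals $\theta(z,\alpha z;a)/\theta(z,\alpha z;b)$; and by (2) the last ratio is $u_\alpha(a)/u_\alpha(b)$. The one genuinely technical point throughout is the absolute convergence and legitimacy of reindexing the infinite products, which is the standard Mumford--Schottky estimate carried out in \cite{D} and \cite{shalit}; once that analytic input is granted, the four identities reduce to M\"obius algebra and the group-theoretic fact that $\Gamma$ acts on itself by translation.
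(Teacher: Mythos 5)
Your proof is correct and is precisely the standard argument: the paper gives no proof of Proposition \ref{theta}, simply deferring to Manin--Drinfeld \cite{D} (see also \cite{shalit}), and your derivation of (1), (2) and (4) by M\"obius algebra plus reindexing of the absolutely convergent product, with (3) obtained as a formal consequence of the other three, is exactly the argument those references contain. The one analytic input --- the Schottky convergence estimate that legitimizes the rearrangements --- is correctly isolated and attributed.
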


We recall that $\Phi:\mathbf{Z}[S]^0 \times \mathbf{Z}[S]^0 \rightarrow K^{\times}$ is defined by:

$$\Phi (\alpha, \beta)=\theta(a,\alpha a,z)/\theta(a,\alpha a, \beta z)=u_{\alpha}(z)/u_{\alpha}(\beta z).$$

The results of Mumford \cite{M} imply that we can identify $\Gamma^{ab}$ with $\Z[S]^0$. and that $\cT_{\Gamma}$ is the universal covering of the graph $\Delta$. Moreover, Manin and Drinfeld proved that $v_K \circ \Phi$ is positive definite ($v_K$ is the $p$-adic valuation of $K$). According to lemma \cite[4.2]{B-L}, the pairing $\Phi$ takes values in $\Q_p^{\times}$.

We recall that we defined in \cite{B-L} an extension $\Phi: \mathbf{Z}[S] \times \mathbf{Z}[S] \rightarrow K^{\times}$ as follow:

For all $0 \leq i \leq g$, we had chosen $\xi_c^{(i)}$ (resp. $\xi_{c'}^{(i)}$) in $\mathfrak{H}_{\Gamma}$ which reduces modulo $\Gamma$ to the cusp $\xi_c \otimes \Q_p$ (resp. $\xi_{c'} \otimes \Q_p$), and such that $\xi_c^{(i)}$ and $\xi_{c'}^{(i)}$ are separated by an annulus reducing to $e_i$.  Let $\tilde{v}_c^{(i)}$ and $\tilde{v}_{c'}^{(i)}$ be two neighbour vertices of $\mathcal{T}_{\Gamma}$ above $v_c$ and $v_{c'}$ respectively, separated by an edge reducing to $e_i$. We fix $\tilde{v}_c^{(0)} = \tilde{v}_c$ and $\tilde{v}_{c'}^{(0)} = \tilde{v}_{c'}$. Thus, we had chosen $\xi_c^{(i)}$ (resp. $\xi_{c'}^{(i)}$) in $\rho^{-1}(\tilde{v}_c^{(i)})$ (resp.  $\rho^{-1}(\tilde{v}_{c'}^{(i)})$). Let for all $0 \leq i \leq g$, $\xi_c^{(i)} = z_0 \in A$, then the $\xi_{c'}^{(i)}$ satisfy
$$\xi_{c'}^{(i)} = \alpha_i^{-1}(\xi_{c'}^{(0)}) \in B_i \text{ .}$$
Therefore, we have $\xi_{c'}^{(0)} \in \rho^{-1}(\tilde{v}_{c'})=A'$ and $\alpha_i^{-1}(A') \subset \alpha_i^{-1}(\mathbf{P}^1 - C_i') \subset B_i$. We can assume also without losing in generality that $z_0 \neq \infty$.

Thus, for any $a \in \mathfrak{H}_{\Gamma}$, we had defined an extension of $\Phi$ to a pairing on $\Z[S]^0 \times \Z[S]$ (and taking values in $K$) as follow : 

for all $\alpha \in \Gamma$, 
\begin{equation}
\Phi(\alpha, e_i) = \frac{\theta(a, \alpha(a), \xi_{c'}^{(i)})}{\theta(a, \alpha(a), \xi_{c}^{(i)})}=\frac{u_{\alpha}(\xi_{c'}^{(i)})}{u_{\alpha}(\xi_{c}^{(i)})}=\frac{u_{\alpha}(\xi_{c'}^{(i)})}{u_{\alpha}(z_0)}
\label{def_Q_1}
\end{equation}

Let $\lambda' :\mathfrak{X}_K\rightarrow \mathbf{P}^1_K$ be $\lambda' = \lambda \circ w_p$. The Atkin--Lehner involution acts on $\Gamma \backslash \mathcal{T}_{\Gamma}$ and lifts to an orientation reversing involution $w_p$ of $\mathcal{T}_{\Gamma}$ (by the universal covering property). By \cite{G} ch. VII Sect. $1$, there is a unique class in $N(\Gamma)/\Gamma$ (where $N(\Gamma)$ is the normalizer of $\Gamma$ in $\PGL_2(K)$) inducing $w_p$ on $\mathcal{T}_{\Gamma}$. We denote by $w_p$ the induced map of $\mathfrak{H}_{\Gamma}$ (it is only unique modulo $\Gamma$).

Fix $0 \leq i , j \leq g$. 

Let $z \in \mathfrak{H}_{\Gamma}$ near $\xi_{c}^{(i)}$ and $z'$ near $\xi_{c'}^{(i)}$ such that $\tau(z) = w_p(\tau(z'))$. Recall that by hypothesis, $\xi_{c}^{(i)}=z_0$ is independent of $i$.

For $0 \leq i,j \leq g$,  We bilinearly extend $\Phi$ to $\Z[S] \times \Z[S]$ in \cite{B-L} as follow:
\begin{equation}
\Phi(e_i,e_j) = \text{lim } \lambda'(\tau(z))^2 \cdot \frac{\theta(z',z,\xi_{c'}^{(j)})}{\theta(z',z,\xi_{c}^{(j)})}
\label{def_Q_2}
\end{equation}
where $z$ and $z'$ approach $\xi_{c}^{(i)}$ et $\xi_{c'}^{(i)}$ respectively.
Since at $z = z_0$, $\lambda' \circ \tau$ has a simple pole and the numerator and denominator have a simple zero and simple pole respectively, $\Phi(e_i,e_j)$ is finite, and is in $K^{\times}$ since $K$ is complete (we choose $z$ and $z'$ in $K$ to compute the limit).

\section{Proof of the main Theorem}

\subsection{Case where $\lambda(e_0) \in \mathbf{F}_p$.} Assume that $\lambda(e_0) \in \mathbf{F}_p$. We can choose a lift of the involution $w_p$  to $\tilde{w}_p$ of $N(\Gamma) \subset \mathrm{PGL}_2(K)$ preserving the edge $e_0$ and reversing the orientation of this edge. Thus, $w_p$ preserves the annulus $\tilde{e_0}'$, so sends $A$ to $A'$. Hence, $\tilde{w}_p$ is an involution (we have $\tilde{w}_p^2 \in \Gamma$ and the stabilizer of an edge in $\Gamma$ is trivial, so $\tilde{w}_p^2=1$).

Let $\zeta=w_p(\infty)$ (\textit{i.e.} $w_p(\zeta)=\infty$). Then our choice of the fundamental domain of $\mathfrak{H}_\Gamma$, implies that $B_0 = \{z \in \mathbf{P}^1_K, \mid z -\zeta \mid_p < 1\}$, $\mathbf{P}^1_K \backslash C'_0 = B_0 \backslash c_0 = \{z \in \mathbf{P}^1_K, \mid z-\zeta \mid_p < \mid p \mid_p\}$, and $B_0 \mod p = \lambda(e_0)$. 

Any involution of $\mathbf{P}^1_K$ exchanging $0$ and $\infty$ has the form $$z \rightarrow \frac{\pi}{z},$$
where $\pi$ is an uniformizer $\cO_K$. Thus, we have:

\begin{equation}\label{inverting}
w_p(z)-\zeta = \frac{\pi}{z-\zeta}
\end{equation}

\begin{lemma} 
We have $\Phi(e_0,e_0) \sim \pi$.
\end{lemma}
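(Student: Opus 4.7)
Setting $z'=\tilde{w}_p(z)$ (which realizes the constraint $\tau(z)=w_p(\tau(z'))$) and writing $\eta:=\xi_{c'}^{(0)}=\tilde{w}_p(z_0)$, my plan is to Taylor expand each factor in the limit (\ref{def_Q_2}) defining $\Phi(e_0,e_0)$ about $z=z_0$, with local parameter $t=z-z_0$. The simple pole of $\lambda'\circ\tau$ at $z_0$ contributes $\lambda'(\tau(z))^2\sim R^2/t^2$, where $R$ is its residue; using $z'-\eta=\tilde{w}_p'(z_0)\,t+O(t^2)$, a direct expansion yields
\[
\frac{\theta(z',z,\eta)}{\theta(z',z,z_0)}\sim -\frac{\tilde{w}_p'(z_0)\,C_2}{(z_0-\eta)^2\,C_1}\,t^2,
\]
with $C_1=\prod_{\gamma\neq 1}(z_0-\gamma\eta)/(z_0-\gamma z_0)$ and $C_2=\prod_{\gamma\neq 1}(\eta-\gamma\eta)/(\eta-\gamma z_0)$. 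Substituting $\tilde{w}_p'(z_0)=-\pi/(z_0-\zeta)^2$ from (\ref{inverting}) gives
\[
\Phi(e_0,e_0)=\frac{\pi\,R^2\,C_2}{(z_0-\zeta)^2(z_0-\eta)^2\,C_1}.
\]

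The central task is to show that the remaining factor is a principal unit. The key tool is the bijection $\Gamma\setminus\{1\}\to\Gamma\setminus\{1\}$, $\gamma\mapsto\tilde{w}_p\gamma\tilde{w}_p$, valid since $\tilde{w}_p\in N(\Gamma)$ and $\tilde{w}_p^2=1$. Applying this bijection to $C_2$ and rewriting each factor via the Mobius identity $\tilde{w}_p(x)-\tilde{w}_p(y)=-\pi(x-y)/[(x-\zeta)(y-\zeta)]$ yields the $\tilde{w}_p$-symmetry relation $C_1 C_2=\theta(\eta,z_0,\zeta)\cdot(z_0-\zeta)/(\eta-\zeta)$. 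Combined with the identity $(z_0-\zeta)(\eta-\zeta)=\pi$ from (\ref{inverting}), this simplifies to $C_1 C_2=T(z_0-\zeta)^2/\pi$ with $T=\theta(\eta,z_0,\zeta)$. Similarly, the relation $\lambda'\circ\tau=(\lambda\circ\tau)\circ\tilde{w}_p$ yields $R=-A(z_0-\zeta)^2/\pi$, with $A$ the residue of $\lambda\circ\tau$ at $\eta$.

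Substituting these identities back, and noting that $(z_0-\eta)=(z_0-\zeta)-\pi/(z_0-\zeta)\equiv(z_0-\zeta)\pmod{U_1(K)}$ since $v_K(\pi/(z_0-\zeta)^2)>0$, the formula for $\Phi(e_0,e_0)$ collapses modulo $U_1(K)$ to $\pi$ times a specific ratio of theta-values at the cusp-lifts. The main obstacle is verifying this ratio is a principal unit; this is done by choosing lifts of the zero-cusps of $\lambda$ symmetrically under $\tilde{w}_p$ (possible because $\tilde{w}_p$ preserves the edge $e_0$ and its endpoints), so that in the theta-quotient representation of $\lambda\circ\tau$ the paired factors are identified modulo $U_1(K)$ under the same $\tilde{w}_p$-bijection, completing the reduction $\Phi(e_0,e_0)\equiv\pi\pmod{U_1(K)}$.
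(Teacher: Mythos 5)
Your opening computation is sound and agrees with the paper's: the exact identity $\Phi(e_0,e_0)=\pi R^2C_2/\bigl((z_0-\zeta)^2(z_0-\eta)^2C_1\bigr)$ is a correct repackaging of the limit defining $\Phi(e_0,e_0)$, and your relation $C_1C_2=\theta(\eta,z_0,\zeta)\cdot(z_0-\zeta)/(\eta-\zeta)$ does follow from the conjugation bijection $\gamma\mapsto\tilde{w}_p\gamma\tilde{w}_p$ together with the M\"obius identity. But from there the argument has two genuine gaps. The first is structural: your formula involves the \emph{quotient} $C_2/C_1$, and an identity for the \emph{product} $C_1C_2$ cannot determine it --- after substitution you are still left with an uncontrolled factor $C_1^2$. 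What is actually needed (and what the paper uses) is the direct ultrametric observation that for $\gamma\neq 1$ the points $\gamma z_0$ and $\gamma\eta$ lie in a common disk $B_i$ or $C_i$ not containing $z_0$ or $\eta$, so that \emph{every individual factor} of $C_1$ and of $C_2$ is a principal unit; hence $C_1\sim C_2\sim 1$ and the whole symmetry detour is unnecessary.

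The second gap is the essential one: you never pin down the residue. Granting $C_1\sim C_2\sim 1$ and $(z_0-\eta)\sim(z_0-\zeta)$, your formula reduces to $\Phi(e_0,e_0)\sim A^2/\pi$, so the lemma is \emph{equivalent} to $A\sim\pm\pi$ (equivalently $R\sim -z_0^2$). This is the one nontrivial analytic input, and in the paper it is precisely the cited \cite[Proposition 6.3]{B-L}, namely $\lim\lambda'(\tau(z))(z_0-z)/z_0^2=1$. Your proposal replaces this known input by the unknown residue $A$ of $\lambda\circ\tau$ at $\eta$ and then defers its evaluation to the closing sentence (``choosing lifts of the zero-cusps of $\lambda$ symmetrically under $\tilde{w}_p$ \dots completing the reduction''), which is a plan rather than a proof: representing $\lambda\circ\tau$ as a theta quotient with a controlled multiplicative constant, and matching its factors under the $\tilde{w}_p$-bijection modulo principal units, is itself the hard content and essentially amounts to re-deriving the proposition you are not invoking. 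As written, the argument establishes only $\Phi(e_0,e_0)\sim A^2/\pi$, not $\Phi(e_0,e_0)\sim\pi$.
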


\begin{proof}
Recall the definition:
$$\Phi(e_0,e_0) =  \text{lim } \lambda'(\tau(z))^2 \cdot \frac{\theta(z',z,\xi_{c'}^{(0)})}{\theta(z',z,\xi_{c}^{(0)})}$$
where $z$ goes to $z_0 \in A$ and $z' = w_p(z)$. We now do a similar analysis as in \cite[Section $6.1$]{B-L}.
By \cite[Proposition $6.3$]{B-L}, 
$$ \text{lim } \frac{\lambda'(\tau(z))\cdot (z_0-z)}{z_0^2} = 1\text{ .}$$
Recall also that 
$$\frac{\theta(z',z,\xi_{c'}^{(0)})}{\theta(z',z,\xi_{c}^{(0)})} = \prod_{\gamma \in \Gamma} \frac{(z'-\gamma(\xi_{c'}^{(0)}))\cdot (z-\gamma(z_0))}{(z-\gamma(\xi_{c'}^{(0)}))\cdot (z'-\gamma(z_0))} \text{ .}$$
If $\gamma \neq 1$, then $\gamma(z_0)$and $\gamma(\xi_{c'}^{(0)}$ both lie in the same disk $B_i$ or $C_i$ for some $i$ which does not contain $z'$, $\xi_{c'}^{(0)}$ and $z_0$. Since $\mid z_0 \mid_p >1$ and $\mid z' \mid_p \leq 1$, the only term in this infinite product which is not a principal unit is the one corresponding to $\gamma=1$, which is equivalent modulo principal units to
$$\frac{(z'-\xi_{c'}^{(0)})\cdot (z-z_0)}{-z_0^2} \text{ .}$$
Thus, we have:
$$\Phi(e_0,e_0) \sim \frac{z_0^2}{z_0-z} \cdot (z'-\xi_{c'}^{(0)}) \text{ .}$$
To conclude the proof of the Lemma, note that 
$$z' - \xi_{c'}^{(0)} = (z' - \zeta) - (\xi_{c'}^{(0)}-\zeta) = \frac{\pi}{z-\zeta} - \frac{\pi}{z_0 - \zeta} \sim\frac{\pi \cdot (z_0-z)}{z_0^2} \text{ .}$$

\end{proof}

\subsubsection{Conclusion of the proof of point $(ii)$ Theorem \ref{main_theorem} in the case where $\lambda(e_0) \in \mathbf{F}_p$}

To conclude the proof of point $(ii)$ of Theorem \ref{main_theorem}, it remains to show that
$$\pi \sim F_p(\beta_0, \beta_0^p) $$
for any lift $\beta_0$ of $\lambda(e_i)$ in $K$.
 
The proof is really the same as \cite[3.1--3.3]{shalit2}, using our analogous fundamental domain for $\Gamma$, and replacing $j$ by $\lambda'$. Thus, we shall be really sketchy and refer the reader to de Shalit's paper for details.

We recall that have $ord_p(\pi) = 1$. By slight abuse of notation we shall denote $\lambda$ for $\lambda \circ c $ and $\lambda'$ for $\lambda \circ c\circ w_p$.

Let $y=z- \zeta $; it identifies the annulus $a=\rho^{-1}(\tilde{e_0})$ with $$A(p, 1):=\{ x \in \mathbf{P}^1_K, \mid  p\mid_p<\mid x \mid_p <1\} \text{ .}$$ Consider the map $\Psi : a \rightarrow B_0$ defined by 
\begin{equation}\label{map 1}
\Psi(z) =  \lambda' \circ \tau (z)
\end{equation}

This is a covering of $B_0$ by $a$ since $a$ is the intersection of our fundamental domain $D$ with $B_0$ (although it might seems surprising compared to the classical complex situation, such a covering indeed exists).

There exists $\beta_0 \in B_0$, such that 
\begin{equation}
\Psi(z)=\beta_0 + \sum_{n \geq 1 } a_n y^n + \sum_{n \geq 1} b_n (\pi/y)^n
\end{equation}

where all the coefficients  $a_n$, $b_n$ are in $K$ (since $\Psi$ is $K$-rational by Proposition \ref{iso lambda}). 
For $y$ 

Using \cite[Lemma 6.2]{B-L} and similar computations as \cite[p. 143-144]{shalit2}, we get:
\begin{lemma}\label{coefficients}
We have
$a_1 \sim 1$, $ b_p \sim 1$ and for $n<p$, $\mid b_n \mid_p < \mid p \mid_p$.
\end{lemma}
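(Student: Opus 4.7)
The proof follows the method of de Shalit in \cite[pp.~143--144]{shalit2}, with $\lambda'$ replacing the $j$-invariant, and relies on Lemma~6.2 of \cite{B-L} for the required normalization. The overall strategy is to analyze the two ends of the annulus $a=\{|p|<|y|<1\}$ separately: the inner end $|y|\to|p|$ corresponds under $\rho$ to the étale component of $\mathfrak{X}_k$, where $\lambda'$ is the natural coordinate, while the outer end $|y|\to 1$ corresponds to the multiplicative component, where Kronecker's congruence gives $\lambda'\equiv\lambda^p\pmod\pi$.

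To establish $a_1\sim 1$, I would use that $B_0=\{|y|<1\}$ contains the affinoid $A'=\rho^{-1}(\tilde v_{c'})$, which reduces to a neighborhood of $e_0$ on the étale component of $\mathfrak{X}_k$ (corresponding to the $2$-gon cusp $\xi_{c'}$). By Lemma~\ref{normalization}(i), $B_0$ reduces to $\lambda'(e_0)=\lambda(e_0)^p$; and the Schottky coordinate $y$ is chosen to match the natural coordinate $\lambda'$ on the étale component. The map $\Psi:B_0\to\mathbf{P}^1_K$ is therefore tangent to the identity at $\zeta$ modulo principal units, and Lemma~6.2 of \cite{B-L} makes this precise, yielding $a_1\sim 1$.

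For $b_p\sim 1$ and $|b_n|_p<|p|_p$ when $1\le n<p$, I would combine the Atkin--Lehner involution with Kronecker's congruence. Since $\lambda'=\lambda\circ w_p$ and $\tilde w_p$ acts on $c_0$ by $y\mapsto\pi/y$, we have $\Psi=\Psi_0\circ\tilde w_p$ where $\Psi_0:=\lambda\circ\tau$. Expanding $\Psi_0(z)=\tilde\beta_0+\sum\tilde a_n y^n+\sum\tilde b_n(\pi/y)^n$ and substituting $y\mapsto\pi/y$ identifies $a_n=\tilde b_n$ and $b_n=\tilde a_n$. Corollary~\ref{Kronecker_congruence} applied to the étale component gives $\lambda\equiv{\lambda'}^p\pmod\pi$, hence $\Psi_0\equiv\Psi^p\pmod\pi$ on $B_0\cap\mathfrak{H}_\Gamma$. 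Expanding $\Psi^p=(\beta_0+a_1 y+\cdots)^p$ using $(A+B)^p\equiv A^p+B^p\pmod p$ and $a_1\sim 1$ yields $\Psi_0\equiv\beta_0^p+a_1^p y^p+O(y^{p+1})\pmod\pi$; matching coefficients forces $\tilde a_n\equiv 0\pmod\pi$ for $1\le n<p$ and $\tilde a_p\sim 1$. Translating back, $b_n\equiv 0\pmod\pi$ for $1\le n<p$ and $b_p\sim 1$.

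The main obstacle is strengthening $b_n\equiv 0\pmod\pi$ to the strict inequality $|b_n|_p<|p|_p$ for $1\le n<p$. This requires upgrading the analysis one order: I would track the middle-binomial error $p\cdot(\text{stuff})$ in $(A+B)^p=A^p+B^p+p\cdot(\cdots)$ together with a sharper form of Lemma~6.2 of \cite{B-L} giving the second-order behavior of $\Psi$ in $y$ and $\pi$, and use the precise width $|p|$ of the annulus (Lemma~\ref{normalization}(iv)) to rule out any contribution at level $|p|_p$ to $b_n$ for $n<p$. This is exactly the point at which de Shalit's computation on p.~144 of \cite{shalit2} must be transposed most carefully to the $\lambda$-setting.
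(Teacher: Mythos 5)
Your overall strategy (analyzing the two ends of the annulus, transporting the expansion through $\tilde w_p: y\mapsto \pi/y$, and invoking the Kronecker congruence) is the right one, and your arguments for $a_1\sim 1$ and $b_p\sim 1$ are essentially what the paper intends (the paper itself only cites \cite[Lemma 6.2]{B-L} and \cite[pp. 143--144]{shalit2} here). But there is a genuine gap, which you yourself flag: you only obtain $\mid b_n\mid_p\leq \mid p\mid_p$ for $1\leq n<p$, not the strict inequality. This is not a removable technicality. First, the strict bound is exactly what the lemma is for: in the subsequent argument one evaluates $\lambda\circ\tau=\Psi(\pi/y)=\beta_0+\sum_n a_n(\pi/y)^n+\sum_n b_n y^n$ on the outer sub-annulus $a(t,1)$, where $\mid b_n y^n\mid_p$ is essentially $\mid b_n\mid_p$; only $\mid b_n\mid_p<\mid p\mid_p$ puts these terms into the ideal $I(t)$, whereas a coefficient with $\mid b_n\mid_p=\mid p\mid_p$ would contribute at exactly the order of $pR(\lambda)$ and destroy the conclusion $(\pi/y)\cdot y\sim pR(\lambda)$. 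Second, your proposed repair cannot work as described: the coefficient of $y^n$ ($0<n<p$) in $\Psi(y)^p$ is $\binom{p}{n}\beta_0^{p-n}a_1^n+\cdots$, which has absolute value \emph{exactly} $\mid p\mid_p$ since $\beta_0$ and $a_1$ are units. Hence no congruence of the form $\lambda\equiv\lambda'^p \ (\mathrm{mod}\ p)$, however carefully you track the middle binomial terms, can yield $\mid b_n\mid_p<\mid p\mid_p$; you need to identify $\lambda-\lambda'^p$ modulo $p$ times the maximal ideal and see that its $y^n$-coefficient cancels $\binom{p}{n}\beta_0^{p-n}a_1^n$ exactly. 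In de Shalit's computation this finer input comes from the theory of the canonical subgroup (the quotient by the canonical subgroup agrees with the Frobenius \emph{twist} $E^{(\sigma)}$, not with a naive $p$-th power of a lift, to a precision better than $p$ near the edge of the supersingular disk), and transposing that step to the $\lambda$-setting is precisely the content you have not supplied.

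A secondary point: your assertion that $\Psi_0\equiv\Psi^p\ (\mathrm{mod}\ \pi)$ holds on all of $B_0\cap\mathfrak{H}_\Gamma$ is too strong. The Kronecker congruence only gives $(\lambda^p-\lambda')(\lambda-\lambda'^p)\equiv 0\ (\mathrm{mod}\ p)$; in the middle of the annulus (around $\mid y\mid_p=\mid p\mid_p^{p/(p+1)}$) neither factor need be $O(p)$, and each individual congruence is valid only on a sub-annulus near the appropriate end. This part is repairable --- Laurent coefficients can be bounded by sup-norms on circles near one end, and the bounds one obtains for the $b_n=\tilde a_n$ do come from the correct end --- but as written the domain of validity is wrong, and you should make explicit which end of the annulus controls which family of coefficients before extracting the estimates.
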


For $t \in \mathbf{C}_p^{\times}$ such that $\mid p \mid_p < \mid t \mid_p < 1$, let $a(t,1)$ be the open annulus where $\mid t \mid_p <\mid y \mid_p < 1$.

For $t$ close enough to $1$ and $y \in a(t,1)$, we set
$$u = \lambda' - \lambda^p = \Psi(\pi/y) - \Psi(y)^p \text{ .}$$
We have, by definition:
$$ F_p(\lambda, \lambda^p + u) = 0 \text{ .}$$
This gives us, using partial derivatives and Corollary \ref{Kronecker_congruence}:
$$u\cdot (\lambda^{p^2}-\lambda+u^p) = -pR(\lambda)-p\cdot  h(u,\lambda)$$
where $R(X) = \frac{F_p(X,X^p)}{p} \in \mathbf{Z}[X]$ and $h(X,Y) \in \mathbf{Z}[X,Y]$ is some integer coefficients polynomial. 

We work modulo the ideal $I(t)$ generated by rigid analytic functions on $a(t,1)$ which are strictly smaller than $\mid p \mid_p$ in absolute value. The term $-pR(\lambda) - p\cdot h(u,\lambda)$ is congruent to $-pR(\lambda)$ modulo $I(t)$. A simple computation using Lemma \ref{coefficients} shows that we must have $u \equiv \pi/y$ modulo $I(t)$ and 
$$\lambda^{p^2}-\lambda = \Psi(y)^{p^2}-\Psi(y) \equiv -y + O(y^2) \text{ (modulo }p  \text{).}$$

This shows what we needed to conclude the proof of point $(ii)$ of Theorem \ref{main_theorem}:
$$(\pi/y) \cdot y \sim pR(\lambda)\text{ .}$$

\subsubsection{Existence of CM lifts}
In this section, we prove part $(iii)$ of Theorem \ref{main_theorem} in the case $\lambda(e_0) \in \mathbf{F}_p$. 
\begin{prop}\label{CM_lift}
Let $\lambda \in \mathbf{F}_p$ be a supersingular $\lambda$-invariant. Then $p\equiv 3 \text{ (modulo }4\text{)}$ and there exists precisely two $\lambda$-invariants in $\mathbf{Q}_p(\sqrt{-p})$ lifting $\lambda$, with complex multiplication by $\mathbf{Z}[\frac{1+\sqrt{-p}}{2}]$.

Furthermore, when $p\equiv 3 \text{ (modulo }4\text{)}$ (and $p\geq 5$ as usual), the number of supersingular $\lambda$-invariants in $\mathbf{F}_p$ is $3\cdot h(-p)$ where $h(-p)$ is the class number of $\mathbf{Q}(\sqrt{-p})$.
\end{prop}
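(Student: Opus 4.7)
The plan is to combine a trace-formula argument for the congruence condition with Deuring's lifting theorem applied to an explicit endomorphism.

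The condition $\lambda \in \mathbf{F}_p$ is equivalent to saying that the underlying supersingular elliptic curve $E$ admits a model over $\mathbf{F}_p$ with $E[2] \subset E(\mathbf{F}_p)$. Since $E$ is supersingular, its Frobenius has trace zero, so $\#E(\mathbf{F}_p) = p+1$, and the rationality of $E[2]$ forces $4 \mid p+1$, i.e.\ $p \equiv 3 \pmod 4$.

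The Frobenius $\pi \in \mathrm{End}(E)$ satisfies $\pi^2 = -p$ and acts trivially on $E[2]$; consequently $1+\pi$ is divisible by $2$ in $\mathrm{End}(E)$, so $\omega := (1+\pi)/2 \in \mathrm{End}(E)$, and the maximal order $\mathcal{O}_L := \mathbf{Z}[(1+\sqrt{-p})/2]$ of $L := \mathbf{Q}(\sqrt{-p})$ embeds in $\mathrm{End}(E)$. Applying Deuring's lifting theorem to the pair $(E, \omega)$ yields a lift $\widetilde{E}$ in characteristic zero carrying an embedding $\mathcal{O}_L \hookrightarrow \mathrm{End}(\widetilde{E})$; thus $\widetilde{E}$ has complex multiplication by $\mathcal{O}_L$. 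By the main theorem of complex multiplication, $\widetilde{E}$ is defined over the Hilbert class field $H$ of $L$; since $p$ is totally ramified in $L/\mathbf{Q}$ and $H/L$ is unramified above $p$, a prime of $H$ above $(\sqrt{-p})$ has local completion $\mathbf{Q}_p(\sqrt{-p})$, so the $\lambda$-invariant of $\widetilde{E}$ can be arranged to lie in $\mathbf{Q}_p(\sqrt{-p})$ and to reduce to $\lambda$. The count of \emph{two} such lifts comes from Serre--Tate rigidity---once the $\Gamma(2)$-structure and the embedding $\iota\colon \mathcal{O}_L \hookrightarrow \mathrm{End}(E)$ are fixed, the lift is unique---combined with the action of the non-trivial element $\sigma \in \mathrm{Gal}(\mathbf{Q}_p(\sqrt{-p})/\mathbf{Q}_p)$: the conjugate $\widetilde{E}^\sigma$ carries the opposite CM-structure $\iota \circ c$ (swapping $\omega$ with $1-\omega$), still reduces to $(E, \alpha_2)$ since reduction is $\mathbf{F}_p$-equivariant, and is distinct from $\widetilde{E}$ because $\omega \neq 1-\omega$ in $\mathrm{End}(E)$.

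For the enumeration of supersingular $\lambda$-invariants in $\mathbf{F}_p$, invoke the Deuring correspondence: elliptic curves with $\mathcal{O}_L$-CM are in bijection with $\mathrm{Cl}(\mathcal{O}_L)$, producing $h(-p)$ isomorphism classes of $\mathcal{O}_L$-CM elliptic curves in characteristic zero, hence $6 h(-p)$ pairs $(\widetilde{E}, \text{basis of } \widetilde{E}[2])$; each yields a $\lambda$-invariant in $\mathbf{Q}_p(\sqrt{-p})$. Reducing modulo the maximal ideal and using that the fibers of the reduction map have size $2$ (by the two-lifts result above) yields $6 h(-p)/2 = 3 h(-p)$ supersingular $\lambda$-invariants in $\mathbf{F}_p$. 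The main obstacle will be the local verification that each supersingular $\lambda \in \mathbf{F}_p$ admits \emph{exactly} two CM lifts in $\mathbf{Q}_p(\sqrt{-p})$: one needs a careful Serre--Tate analysis of the formal deformation space at the supersingular point together with its $\mathrm{Gal}(\mathbf{Q}_p(\sqrt{-p})/\mathbf{Q}_p)$-action, and a matching of this local count with the global class-number count.
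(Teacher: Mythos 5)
Your opening steps are sound and in one place cleaner than the paper's: deducing $p\equiv 3\ (\mathrm{mod}\ 4)$ from $\#E(\mathbf{F}_p)=p+1$ and $4\mid\#E(\mathbf{F}_p)$, and obtaining the integrality of $(1+\pi)/2$ in $\End(E)$ because Frobenius fixes $E[2]$, are both correct and give the embedding $\mathbf{Z}[\frac{1+\sqrt{-p}}{2}]\hookrightarrow\End(E)$ directly in characteristic $p$ (the paper gets the same order by a characteristic-zero argument). But there are two genuine gaps. First, your descent to $\mathbf{Q}_p(\sqrt{-p})$ only treats the $j$-invariant: $\widetilde{E}$ is defined over the Hilbert class field $H$, and even there ``$H/L$ unramified above $p$'' is not enough --- you need $(\sqrt{-p})$ to split \emph{completely} in $H/L$, which holds because it is principal, not merely because $H/L$ is unramified. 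More seriously, the statement concerns the $\lambda$-invariant, which generates an extension $F(\lambda)$ of $H$ contained in the ray class field of conductor $2\mathcal{O}_L$ (of degree up to $6$ over $H$); to place $\lambda$ itself in $\mathbf{Q}_p(\sqrt{-p})$ one must show $(\sqrt{-p})$ splits completely in that ray class field, i.e.\ that $\sqrt{-p}\equiv 1$ modulo the primes above $2$. This is exactly the content of the paper's Lemma~\ref{split_lambda}, proved by a case analysis on $p$ modulo $8$, and your argument skips it.

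Second, the count of \emph{exactly} two lifts is the heart of the proposition and you explicitly defer it (``the main obstacle will be\ldots''). Serre--Tate rigidity does not apply as stated: $E$ is supersingular, so there are no Serre--Tate coordinates and no canonical lift in the ordinary sense; the relevant tool would be Gross's theory of quasi-canonical lifts for the ramified order $\mathcal{O}_L\otimes\mathbf{Z}_p$, together with an Eichler-type count ruling out extra lifts coming from non-conjugate optimal embeddings $\mathcal{O}_L\hookrightarrow\End(E)$. The paper avoids all of this with a short global argument: by Corollary~\ref{Kronecker_congruence}, $F_p(X,X)\equiv -(X^p-X)^2$ modulo $p$, so each supersingular $\lambda\in\mathbf{F}_p$ is a double root of $F_p(X,X)$ modulo $p$, while the characteristic-zero roots of $F_p(X,X)$ with supersingular reduction are precisely the $\mathcal{O}_L$-CM $\lambda$-invariants and are \emph{simple}; matching multiplicities yields exactly two lifts per $\lambda$, and combined with $\#\mathrm{Cl}(\mathcal{O}_L)=h(-p)$ and the six $\lambda$'s above each $j\neq 0,1728$, the count $3h(-p)$. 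To complete your proof you would need either to carry out the local deformation-theoretic count you sketch or to substitute a multiplicity argument of this kind.
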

\begin{proof}

Let $\mathcal{O} = \mathbf{Z}[\frac{1+\sqrt{-p}}{2}]$ and $F$ be the fraction field of $\mathcal{O}$. Let $\mathfrak{a}$ an element of the ideal class group of $\mathcal{O}$. We denote by $j(\mathfrak{a})$ the $j$-invariant of the isomorphism class of the elliptic curve $\mathbf{C}/\mathfrak{a}$. It is classical (\cf for instance \cite{Silverman} Theorem $5.6$) that if $\lambda$ is any $\lambda$-invariant above $j(\mathfrak{a})$, then $F(\lambda)$ is an extension of $F(j)$ contained in the ray class field of $F$ of conductor $2\cdot \mathcal{O}$. 
\begin{lemma}\label{split_lambda}
The ideal above $p$ in $\mathcal{O}$ is totally split in $F(\lambda)$.
\end{lemma}
\begin{proof}
If $p \equiv -1 \text{ (modulo } 8\text{)}$, then $2$ splits in $F$, so $\sqrt{-p}-1 \in \mathfrak{p}_2$ if $\mathfrak{p}_2$ is any prime ideal of $\mathcal{O}$ above $(2)$. Thus, by class field theory, since $(\sqrt{-p})$ is principal, it splits in the ray class field of conductor $2\cdot \mathcal{O}$ and we are done.

If $p \equiv 3 \text{ (modulo } 8\text{)}$, then $2$ is inert in $F$. The prime ideal above $2$ in $\mathcal{O}$ is $\mathfrak{P}_2 = (2, \alpha^2+\alpha+1)$ where $\alpha = \frac{1+\sqrt{-p}}{2}$. Since $\alpha^2=\alpha - \frac{p+1}{4}$, we have $\mathfrak{P}_2 = (2, \frac{3-p}{4}+\sqrt{-p})$.  Thus we have $\sqrt{-p}-1 \in \mathfrak{P}_2$. As above, class field theory shows that $(\sqrt{-p})$ splits in the ray class field of $F$ of conductor $\mathfrak{P}_2$, which concludes the proof of the lemma.
\end{proof}

\begin{lemma}
Let $\lambda \in \overline{\mathbf{Z}}_p$ such that the Legendre curve $E_{\lambda} : y^2=x(x-1)(x-\lambda)$ has supersingular reduction. Then $\lambda$ is a root of $F_p(X,X)$ if and only if $E_{\lambda}$ has CM by $\mathbf{Z}[\frac{1+\sqrt{-p}}{2}]$. Furthermore in this case $\lambda$ is a simple root of $F_p(X,X)$.
\end{lemma}
\begin{proof}
It is clear that $E_{\lambda}$ has CM by a quadratic order $\mathcal{O}$ such that $p$ either splits or ramifies in the fraction field. But $p$ has to ramify since the reduction of $E_{\lambda}$ is supersingular (this comes from the standard description of the local galois representation attached to a supersingular elliptic curve). Furthermore, there is an endomorphism of $E$ whose square is $-p$. Thus, $\sqrt{-p} \in \mathcal{O}$. But in fact we have $\sqrt{-p} \in 1+2\mathcal{O}$ since the endomorphism $\sqrt{-p}$ has to preserve the $\Gamma(2)$-structure. Thus we have $\mathcal{O} = \mathbf{Z}[\frac{1+\sqrt{-p}}{2}]$.
\end{proof}

Corollary \ref{Kronecker_congruence} shows that
$$F_p(X,X)\equiv -(X^p-X)^2 \text{ (modulo }p\text{).}$$
Thus, any supersingular $\lambda$-invariant in $\mathbf{F}_p$ is a double root of $F_p(X,X)$. Using the previous lemma, we get:

$$\prod_{\lambda \text{ supersingular in }\mathbf{F}_p}(X-\lambda)^2 \equiv \prod_{[\mathfrak{a}] \in Cl(\mathbf{Z}[\frac{1+\sqrt{-p}}{2}]) \text{, }\lambda\text{ such that } j(\lambda)=j(\mathfrak{a})} (X-\lambda) \text{ .}$$
This shows that for any supersingular $\lambda$-invariant in $\mathbf{F}_p$, $\lambda$ has two CM lifts in characteristic $0$ which have CM by $\mathbf{Z}[\frac{1+\sqrt{-p}}{2}]$, and by Lemma \ref{split_lambda}, these lifts can be seen as living in $\mathbf{Q}_p(\sqrt{-p})$. This formula also shows the last assertion of the Proposition on the number of supersingular $\lambda$-invariants in $\mathbf{F}_p$ (there are $6$ $\lambda$-invariants above each $j$-invariant since $j(\mathfrak{a}) \neq 0, 1728$ because $p \geq 5$). 
\end{proof}

We now finish the proof of point $(iii)$ of \ref{main_theorem}. This is done in a similar way as \cite{shalit2} p. 146. Let $\lambda_1$ and $\lambda_2$ be the two CM values of lambda invariants in $\mathbf{Q}_p(\sqrt{-p})$ which lift $\lambda(e_i)$ (which exist by Proposition \ref{CM_lift}) . It is clear that $\lambda_1$ and $\lambda_2$ are not in $\mathbf{Q}_p$, so they must be conjugate. Write $\lambda_1 = a+b\sqrt{-p}$ and $\lambda_2 = a-b\sqrt{-p}$ for some $a,b \in \mathbf{Z}_p$. By point $(ii)$ of \ref{main_theorem}, it suffices to prove:
$$ F_p(\lambda_1, \lambda_1^p) \equiv (\lambda_1-\lambda_1^p)^2 \text{ (modulo } p\sqrt{-p}\text{).} $$
We know that $F_p(\lambda_1,\lambda_1)=0$. Therefore, we have:
$$0 = F_p(\lambda_1, \lambda_1) = F_p(\lambda_1, \lambda_1^p+(\lambda_1-\lambda_1^p)) \equiv F_p(\lambda_1, \lambda_1^p) + (\lambda_1-\lambda_1^p)\cdot (\lambda_1^{p^2}-\lambda_1) \text{ (modulo }p\sqrt{-p}\text{)} $$
where the last congruence follows from Corollary \ref{Kronecker_congruence} (which gives $\partial_Y F_p(X,Y) \equiv -(X-Y^p) \text{ (modulo } p \text{)}$).
Since $\lambda_1^{p^2} \equiv \lambda_1^p \equiv a \text{ (modulo }p\text{)}$ and $\lambda_1 \equiv \lambda_1^p \text{ (modulo }\sqrt{-p}\text{)}$, we get:
$$F_p(\lambda_1, \lambda_1^p) \equiv (b\cdot \sqrt{-p})^2 \text{ (modulo }p\sqrt{-p}\text{)} $$
which concludes the proof of Theorem \ref{main_theorem} if $\lambda(e_0) \in \mathbf{F}_p$.

\subsection{Case $\lambda(e_0) \in \mathbf{F}_{p^2} \backslash \mathbf{F}_p$}

Assume now that $\lambda(e_0) \in \mathbf{F}_{p^2} \backslash \mathbf{F}_p$, and without loss of generality that $\lambda(e_0)^p=\lambda(e_g)$. In this case, we choose $w_p$ such that $w_p(\tilde{e}_0') = \tilde{e}_g'$. Since $w_p^2 \in \Gamma$, we have $w_p^2 = \alpha_g$ (see \cite[p. 14]{B-L} for more details).

Let $z_g^+ \in C_g$ (resp. $z_g^- \in B_g$) be the attractive (resp. repulsive) fixed point of $\alpha_g$. As in the case $\lambda(e_0) \in \mathbf{F}_p$, the idea is to compute $w_p$. Let
$$\sigma(z) = \frac{z-z_g^+}{z-z_g^-} \text{ .}$$
Then $\sigma \circ \alpha_g \circ \sigma^{-1}$ fixes $0$ and $\infty$, and we get
$$\sigma\circ w_p \circ \sigma^{-1} = \kappa \cdot z$$
for some $\kappa \in \mathbf{C}_p$ of absolue value $\mid p \mid_p$. We let 
$$\pi := -\kappa \cdot (z_g^+ - z_g) \text{.}$$

Similar arguments as in the case $\lambda(e_0) \in \mathbf{F}_p$ give:
\begin{lemma}
We have $$\Phi(e_0,e_0) \sim \pi$$ and 
$$\pi \sim F_p(\beta_0, \beta_0^p)$$
for any lift $\beta_0$ of $\lambda(e_0)$ in $K$.
\end{lemma}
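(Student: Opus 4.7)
The plan is to parallel, step by step, the two-part proof carried out in the previous subcase $\lambda(e_0)\in\mathbf{F}_p$, replacing at each step the involution formula $w_p(z)-\zeta=\pi/(z-\zeta)$ by the M\"obius identity $w_p=\sigma^{-1}\circ(\kappa\cdot\mathrm{id})\circ\sigma$, where $\sigma(z)=(z-z_g^+)/(z-z_g^-)$ is the coordinate adapted to the fixed points of $\alpha_g=w_p^2$. All the geometric inputs needed for the adaptation---namely $w_p(\tilde e_0')=\tilde e_g'$, $w_p(A)=A'$, and the admissibility of choosing $\xi_{c'}^{(0)}=w_p(z_0)\in A'$---are already recorded in the main text. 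A preliminary normalization, which one may freely make by conjugating $\Gamma$ inside $\PGL_2(K)$, puts $z_g^-=0$ and $|z_g^+|_p=|z_0|_p=1$; with this choice $\pi=-\kappa z_g^+$ has valuation $1$, as required.

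For the first equivalence I would start from
\[
\Phi(e_0,e_0)=\lim\lambda'(\tau(z))^2\cdot\frac{\theta(z',z,\xi_{c'}^{(0)})}{\theta(z',z,z_0)},\qquad z'=w_p(z),\ z\to z_0.
\]
The analysis of the infinite product performed in the proof of the preceding lemma applies verbatim: only the $\gamma=1$ factor escapes the principal units, and combined with $\lambda'(\tau(z))(z_0-z)/z_0^2\sim 1$ (from \cite[Proposition $6.3$]{B-L}) one obtains $\Phi(e_0,e_0)\sim \frac{z_0^2}{z_0-z}\cdot(w_p(z)-w_p(z_0))$. The new input is the explicit identity
\[
w_p(z)-w_p(z_0)=\frac{\kappa(z-z_0)(z_g^+-z_g^-)^2}{(z-z_g^-)(z_0-z_g^-)(1-\kappa\sigma(z))(1-\kappa\sigma(z_0))},
\]
which is a direct algebraic consequence of $\sigma\circ w_p\circ\sigma^{-1}=\kappa\cdot\mathrm{id}$. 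After the normalization above, each denominator factor is a principal unit; substituting back yields $\Phi(e_0,e_0)\sim -\kappa(z_g^+-z_g^-)=\pi$.

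For the second equivalence I would fix a uniformizer $y$ on the annulus $a=\rho^{-1}(\tilde e_0)$ realizing $a\simeq\{|p|_p<|y|_p<1\}$, chosen so that $w_p$ induces $y\mapsto \pi/y$ on $a$ modulo principal units (this is automatic from the very choice of $\pi$). Expanding the rigid-analytic covering $\Psi=\lambda'\circ\tau\colon a\to B_0$ as
\[
\Psi(z)=\beta_0+\sum_{n\geq 1}a_n y^n+\sum_{n\geq 1}b_n(\pi/y)^n,\qquad a_n,b_n\in K,
\]
the analogue of Lemma \ref{coefficients}---$a_1\sim 1$, $b_p\sim 1$, and $|b_n|_p<|p|_p$ for $1\leq n<p$---is obtained by the same local argument as in the $\mathbf{F}_p$-case, based on \cite[Lemma $6.2$]{B-L}. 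The rest of the argument is then a formal transcription: setting $u=\lambda'-\lambda^p$, using $F_p(\lambda,\lambda^p+u)=0$, the Taylor expansion of $F_p$ at $(\lambda,\lambda^p)$ and Corollary \ref{Kronecker_congruence} lead to $u\equiv\pi/y$ and $\lambda^{p^2}-\lambda\equiv -y$ modulo the ideal $I(t)$, whence $\pi\sim pR(\beta_0)=F_p(\beta_0,\beta_0^p)$.

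The main obstacle I anticipate is the verification of the analogue of Lemma \ref{coefficients}. In the $\mathbf{F}_p$-case the two ends of the annulus $a$ both reduced to $\lambda(e_0)$, and the local computation was symmetric under $y\leftrightarrow\pi/y$; here the outer boundary reduces (via $\lambda\circ c$) to $\lambda(e_0)\in\mathbf{F}_{p^2}\setminus\mathbf{F}_p$ while the inner boundary reduces (via $\lambda\circ c\circ w_p$) to $\lambda(e_0)^p=\lambda(e_g)$, so one must keep careful track of the Frobenius twist on the $w_p$-side in order to check that $b_p$ is indeed a principal unit and that the lower coefficients $b_n$ are small. Everything else is a direct copy of the earlier case.
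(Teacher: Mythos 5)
Your overall strategy --- transporting the two-step argument of the $\mathbf{F}_p$ case by means of the explicit form $w_p=\sigma^{-1}\circ(\kappa\cdot\mathrm{id})\circ\sigma$ --- is exactly the intended one (the paper itself only says ``similar arguments'' and defers to \cite[Sections $4.1$--$4.2$]{shalit2}), and your identity for $w_p(z)-w_p(z_0)$ is algebraically correct. But the execution has two genuine gaps. First, your own displayed formula does not give what you conclude: combined with $\Phi(e_0,e_0)\sim\frac{z_0^2}{z_0-z}\,(w_p(z)-w_p(z_0))$ and the estimates $(z-z_g^-)(z_0-z_g^-)\sim z_0^2$ and $1-\kappa\sigma(z)\sim 1$ (valid because $|z_g^{\pm}|_p\leq 1<|z_0|_p$), it yields $\Phi(e_0,e_0)\sim-\kappa(z_g^+-z_g^-)^2$, with the \emph{square}. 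Since $z_g^+\in C_g\subset B_0$ reduces to $\lambda(e_0)^p$ while $z_g^-\in B_g$ reduces to $\lambda(e_0)$, the unit $z_g^+-z_g^-$ is congruent to $\lambda(e_0)^p-\lambda(e_0)\not\equiv 1$ modulo the maximal ideal, so the extra factor cannot be absorbed into principal units. You must either take $\pi\sim-\kappa(z_g^+-z_g^-)^2$ (and then establish the second equivalence for that quantity) or exhibit the compensating unit; as written the first equivalence is off by $z_g^+-z_g^-$. (Your ``preliminary normalization'' neither repairs this nor is available: $\Gamma$ is already rigidified by Lemma \ref{normalization}, and \cite[Proposition $6.3$]{B-L}, which you invoke, depends on that choice; fortunately it is also unnecessary.)

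Second, and more seriously, the assertion that ``$w_p$ induces $y\mapsto\pi/y$ on $a$'' fails here. Since $\sigma\circ w_p\circ\sigma^{-1}=\kappa\cdot z$ with $|\kappa|_p=|p|_p$, the map $w_p$ \emph{translates} by one edge along the axis of $\alpha_g$: it carries the annulus $\rho^{-1}(\tilde e_0')$ onto the distinct annulus $\rho^{-1}(\tilde e_g')$ and preserves no annulus at all (only $\alpha_g=w_p^2$ returns). Hence there is no involution of $a$ of the shape $y\mapsto\pi/y$, and the functional equation $u=\Psi(\pi/y)-\Psi(y)^p$ that drives the entire second half cannot even be written down: $\lambda\circ\tau|_a$ and $\lambda'\circ\tau|_a$ are no longer values of a single series $\Psi$ at $y$ and $\pi/y$. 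The correct substitute is to work on the double annulus $\rho^{-1}(\tilde e_0')\cup A'\cup\rho^{-1}(\tilde e_g')$, on which $\alpha_g$ identifies the outer and inner boundaries while $w_p$ exchanges the two halves, keeping track of the Frobenius twist between the residue annuli of $e_0$ and $e_g$. You flag this twist as a difficulty in estimating the coefficients $b_n$, but the problem occurs one step earlier, in the very definitions of $\Psi$ and of $u$; until that setup is in place, the ``formal transcription'' of the $\mathbf{F}_p$ argument does not go through.
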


We refer as before to \cite[Sections $4.1-4.2$]{shalit2} for details in the $j$-invariant case.

\end{document}